\newtheorem{theorem}{Theorem}
\theoremstyle{plain}
\newtheorem{lemma}{Lemma}
\newtheorem{proposition}{Proposition}
\numberwithin{equation}{section}
\begin{document}
\title[SLLN for random fields]{On general strong laws of large numbers for
fields of random variables}
\author{Cheikhna Hamallah Ndiaye $^{*}$}
\email{chndiaye@ufrsat.org}
\address{LERSTAD, Universit\'{e} de Saint-Louis $^{*}$}
\address{LMA - Laboratoire de Math\'ematiques Appliqu\'ees \\
Universit\'e Cheikh Anta Diop BP 5005 Dakar-Fann S\'en\'egal}

\author{ Gane Samb LO $^{**}$}
\email{ganesamblo@ufrsat.org}
\address{$^{**}$ Laboratoire de Statistiques Th\'eoriques et Appliqu\'ees (LSTA) \\Universit\'e Pierre et Marie Curie (UPMC) France}
\address{LERSTAD, Universit\'{e} de Saint-Louis.}
\urladdr{www.lsta.upmc.fr/gslo}

\begin{abstract}
A general method to prove strong laws of large numbers for random fields is
given. It is based on the H\'{a}jek - R\'{e}nyi type method presented in Nosz%
\'{a}ly and T\'{o}m\'{a}cs \cite{noszaly} and in T\'{o}m\'{a}cs and L\'{i}bor 
\cite{thomas06}. Nosz\'{a}ly and T\'{o}m\'{a}cs \cite{noszaly} obtained a 
H\'{a}jek-R\'{e}nyi type maximal inequality for random fields using moments
inequalities. Recently, T\'{o}m\'{a}cs and L\'{i}bor \cite{thomas06} obtained a
H\'{a}jek-R\'{e}nyi type maximal inequality for random sequences based on
probabilities, but not for random fields. In this paper we present a 
H\'{a}jek-R\'{e}nyi type maximal inequality for random fields, using
probabilities, which is an extension of the main results of Nosz\'{a}ly and T%
\'{o}m\'{a}cs \cite{noszaly} by replacing moments by probabilities and a
generalization of the main results of T\'{o}m\'{a}cs and L\'{i}bor \cite%
{thomas06} for random sequences to random fields. We apply our results to
establishing a logarithmically weighted sums without moment assumptions and
under general dependence conditions for random fields.
\end{abstract}

\keywords{Strong laws of large numbers, Maximal inequality, Probability
inequalities, Random fields.}
\subjclass[2000]{Primary 60F15 60G60. Secondary 62H11 62H35}
\maketitle

\Large

\section{INTRODUCTION AND NOTATIONS}

\label{sec1}

We are concerned in this paper with strong laws of large numbers (SLLN) for
random fields. Although this type of problems is entirely settled for
sequence of independent real random variables (see for instance \cite{loeve}%
) and general strong laws of large numbers for dependent real random
variables based on H\'{a}jek-R\'{e}nyi types inequalities. But as for random
fields, they are still open. As a reminder, we recall that a family of
random elements $(X_{n})_{n\in T}$ is said to be a random field if the set
is endowed with a partial order ($\leq )$, not necessarily complete. For
example, and it is the case in this paper, $T$ may be $\mathbb{\mathbf{N}}%
^{d},$ where $d>1$ is an integer and $\mathbb{\mathbf{N}}$ is the set of
nonnegative integers. For such a real random field $(X_{n})_{n\in \textbf{N}^{d}},$ we intend to contribute to assessing the more general SLLN's, that
is finding general conditions under which there exists a real number $\mu $\ and
a family of normalizing positive numbers $(b_{n})_{n\in \textbf{N}^{d}},$ named here as a $d$-sequence, such that, for $S_{(0,...,0)}=0,$ and $%
S_{n}=\sum_{m\leq n}X_{m}$ for $n>\textbf{0},$ one has%
\begin{equation*}
S_{n}/b_{n}\rightarrow \mu, \\\ a.s.
\end{equation*}

\noindent In the case of random fields, the data may be heavily dependent
and then H\'{a}jek-R\'{e}nyi type maximal inequalities are needed to obtain
strong laws of large numbers, like in the real case. It seems that providing
such inequalities goes back to M\'{o}ricz \cite{moricz0} and Klesov \cite{klesov}%
. Based on such inequalities, many authors established strong laws of large
numbers such as Nguyen et al. \cite{nguyen}, T\'{o}m\'{a}cs \cite{thomas09}
, Lagodowski \cite{lagodowski}, Nosz\'{a}ly and T\'{o}m\'{a}cs \cite{noszaly}%
, M\'{o}ricz \cite{moricz}, Klesov \cite{klesov}, Fazekas et al. \cite%
{fazekas99}, Fazekas \cite{fazekas83}, \cite{fazekas98} and the literature
cited herein.

\bigskip

\noindent One of the motivations of finding general strong laws of large
numbers comes from that the finding, as proved by Cairoli \cite{caroli}, that
classical maximal probability inequalities for random sequences are not
valid in general for random fields. Besides, nonparametric estimation for
random fields or spatial processes was given increasing and simulated
attention over the last few years as a consequence of growing demands from
applied research areas (see for instance Guyon \cite{guyon}). This results
in the serious motivation to extend the H\'{a}jek-R\'{e}nyi type maximal
inequality for probabilities for random sequences, what the cited above
authors tackled.\newline

\noindent Our objective is to give a nontrivial generalization of some
fundamental results of these authors that will lead to positive answers to
classical and non solved SLLN's. Before a more precise formulation of our
problem, we need a few additional notation.\newline

\noindent From now on $d$ is a fixed positive integer. The elements of $%
\mathbb{\mathbf{N}}^{d}$ will be written in font bold like $\mathbf{n}$%
\textbf{\ }while their coordinate are written in the usual way like $\mathbf{%
n=}(n_{1},...,n_{d}).$ $\mathbb{\mathbf{N}}^{d}$ is endowed with the usual
partial ordering, that is $\mathbf{n}=(n_{1},...,n_{d})\leq \mathbf{m}%
=(m_{1},...,n_{d})$ if and only if or each $1\leq i\leq d,$ one has $%
n_{i}\leq m_{i}.$ Further $\textbf{m}<\textbf{n}$ \ means \ $\mathbf{m}\leq \mathbf{n}$ \ and
\ $\mathbf{n}\neq \mathbf{m}.$ We specially denote $(1,...,1)\equiv \mathbf{1}$ and $%
(0,...,0)\equiv \mathbf{0}$. All the limits, unless specification, are meant as 
\textbf{\ }$\mathbf{n}=(n_{1},...,n_{d})\rightarrow $ $\infty $, that is
equivalent to say that $n_{i}\rightarrow +\infty $ for each $1\leq i\leq d.$
To finish, any family of real numbers $(b_{\mathbf{n}})_{\mathbf{n}\in A}$
indexed by a subset $\mathbb{\mathbf{N}}^{d}$ is called a d-sequence. We
intensively use product type d-sequences. A d-sequence $(b_{\textbf{n}})_{\textbf{n}\in A}$ is
of product type if it may be written in the form 
\begin{equation*}
b_{\mathbf{n}}=\prod\limits_{1\leq i\leq d}b_{n_{i}}^{(i)}.
\end{equation*}%
This product type d-sequence is unbounded and nondecreasing if and only if
each sequence $b_{n_{i}}^{(i)}$ is unbounded and nondecreasing in $n_{i}.$
Now with these minimum notation, we are able to state the results of T\'{o}m%
\'{a}cs, L\'{i}bor and their co-authors.\newline

\noindent On one hand, it is known that the H\'{a}jek-R\'{e}nyi \ type
maximal inequality (see \cite{fazekas00}) is an important tool for proving
SLLN's for sequences. It is natural that Nosz\'{a}ly and T\'{o}m\'{a}cs \cite%
{noszaly} used a generalization of this result for random fields in order to
get SLLN's for such objects. They stated

\begin{proposition}
\label{prop1} Let \ $%
r$ \ be a positive real number, \ $a_{\mathbf{n}}$ \ be a nonnegative 
d-sequence. Suppose that \ $b_{\mathbf{n}}$ \ is a positive, nondecreasing 
d-sequence of product type. Then 
\begin{equation*}
\forall \mathbf{n}\in \mathbb{\mathbf{N}}^{d},\ \ \ \mathbb{E}({\max_{\mathbf{\ell}\leq 
\mathbf{n}}}|S_{\mathbf{\ell}}|^{r})\leq {\sum_{\mathbf{\ell}\leq \mathbf{n}}}a_{%
\mathbf{\ell}}\ \ 
\end{equation*}%
implies 
\begin{equation*}
\forall \mathbf{n}\in \mathbb{\mathbf{N}}^{d},\ \ \ \mathbb{E}\left( {\max_{\mathbf{\ell}%
\leq \mathbf{n}}}|S_{\mathbf{\ell}}|^{r}b_{\mathbf{\ell}}^{-r}\right) \leq 4^{d}{%
\sum_{\mathbf{\ell}\leq \mathbf{n}}}a_{\mathbf{\ell}}b_{\mathbf{\ell}}^{-r}.
\end{equation*}
\end{proposition}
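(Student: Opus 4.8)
The plan is to derive the asserted bound from the hypothesis by the classical $d$-fold dyadic blocking that underlies \cite{noszaly}, the product form of $b_{\mathbf{n}}$ being precisely what makes the argument factor over the $d$ coordinates. Put $c_k^{(i)}=(b_k^{(i)})^{-r}$ for $1\le i\le d$; each sequence $(c_k^{(i)})_k$ is positive, nonincreasing, and bounded below by $c_{n_i}^{(i)}>0$, and $b_{\mathbf{\ell}}^{-r}=\prod_{i=1}^{d}c_{\ell_i}^{(i)}$. For a coordinate $i$ and an integer $p\ge 0$ let
\[
B_p^{(i)}=\bigl\{\,k:\ c_1^{(i)}2^{-p-1}<c_k^{(i)}\le c_1^{(i)}2^{-p}\,\bigr\}.
\]
Because $c^{(i)}$ is nonincreasing, the nonempty $B_p^{(i)}$ are consecutive blocks of indices (this is the only use of monotonicity of $b^{(i)}$), and because $c^{(i)}$ stays away from $0$ only finitely many are nonempty; together they partition the index range of the $i$-th coordinate, and I write $u_p^{(i)}=\max B_p^{(i)}$ when $B_p^{(i)}\ne\emptyset$. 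For a multi-index $\mathbf{p}=(p_1,\dots,p_d)$ with all $B_{p_i}^{(i)}\ne\emptyset$ set $\mathbf{B}_{\mathbf{p}}=B_{p_1}^{(1)}\times\cdots\times B_{p_d}^{(d)}$, let $\mathbf{u}_{\mathbf{p}}=(u_{p_1}^{(1)},\dots,u_{p_d}^{(d)})$ be its upper corner, and put $\gamma_{\mathbf{p}}=\prod_{i=1}^{d}c_1^{(i)}2^{-p_i}$; the boxes $\mathbf{B}_{\mathbf{p}}$ form a finite partition of $\{\mathbf{\ell}:\mathbf{\ell}\le\mathbf{n}\}$.

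First I would absorb the weight on each block. If $\mathbf{\ell}\in\mathbf{B}_{\mathbf{p}}$ then $\mathbf{\ell}\le\mathbf{u}_{\mathbf{p}}$ and, coordinatewise, $c_{\ell_i}^{(i)}\le c_1^{(i)}2^{-p_i}$, hence $b_{\mathbf{\ell}}^{-r}\le\gamma_{\mathbf{p}}$; therefore
\[
\max_{\mathbf{\ell}\le\mathbf{n}}|S_{\mathbf{\ell}}|^{r}b_{\mathbf{\ell}}^{-r}
=\max_{\mathbf{p}}\ \max_{\mathbf{\ell}\in\mathbf{B}_{\mathbf{p}}}|S_{\mathbf{\ell}}|^{r}b_{\mathbf{\ell}}^{-r}
\le\sum_{\mathbf{p}}\gamma_{\mathbf{p}}\max_{\mathbf{\ell}\le\mathbf{u}_{\mathbf{p}}}|S_{\mathbf{\ell}}|^{r},
\]
where the last step replaces the maximum over the finitely many $\mathbf{p}$ by their sum (wasteful, but enough for the stated constant). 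Taking expectations of this finite sum of nonnegative variables and applying the hypothesis with $\mathbf{n}$ replaced by each $\mathbf{u}_{\mathbf{p}}$ gives
\[
\mathbb{E}\Bigl(\max_{\mathbf{\ell}\le\mathbf{n}}|S_{\mathbf{\ell}}|^{r}b_{\mathbf{\ell}}^{-r}\Bigr)\le\sum_{\mathbf{p}}\gamma_{\mathbf{p}}\sum_{\mathbf{\ell}\le\mathbf{u}_{\mathbf{p}}}a_{\mathbf{\ell}}.
\]

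It would then remain to rearrange the right-hand side. Since $\{\mathbf{\ell}\le\mathbf{u}_{\mathbf{p}}\}$ is the disjoint union of the blocks $\mathbf{B}_{\mathbf{q}}$ with $\mathbf{q}\le\mathbf{p}$, interchanging the order of summation (all terms being nonnegative) and then evaluating a product of $d$ geometric series (after enlarging, if necessary, the range to all $\mathbf{p}\ge\mathbf{q}$) yields
\[
\sum_{\mathbf{p}}\gamma_{\mathbf{p}}\sum_{\mathbf{\ell}\le\mathbf{u}_{\mathbf{p}}}a_{\mathbf{\ell}}
=\sum_{\mathbf{q}}\Bigl(\sum_{\mathbf{\ell}\in\mathbf{B}_{\mathbf{q}}}a_{\mathbf{\ell}}\Bigr)\sum_{\mathbf{p}\ge\mathbf{q}}\gamma_{\mathbf{p}}
\le 2^{d}\sum_{\mathbf{q}}\gamma_{\mathbf{q}}\sum_{\mathbf{\ell}\in\mathbf{B}_{\mathbf{q}}}a_{\mathbf{\ell}}.
\]
Finally, for $\mathbf{\ell}\in\mathbf{B}_{\mathbf{q}}$ one has $c_1^{(i)}2^{-q_i}<2\,c_{\ell_i}^{(i)}$ for every $i$, so $\gamma_{\mathbf{q}}<2^{d}\prod_{i}c_{\ell_i}^{(i)}=2^{d}b_{\mathbf{\ell}}^{-r}$; substituting this bound term by term and recombining the blocks into one sum over $\{\mathbf{\ell}\le\mathbf{n}\}$ turns the last expression into $4^{d}\sum_{\mathbf{\ell}\le\mathbf{n}}a_{\mathbf{\ell}}b_{\mathbf{\ell}}^{-r}$, which is the claim. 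The whole argument is essentially bookkeeping; the only place where care will be needed is to keep track of which blocks are nonempty so that every sum stays finite and the geometric summation is legitimate, and to note that it is exactly the product structure of $b_{\mathbf{n}}$ that lets both the dyadic decomposition and the geometric sums split across coordinates — which, together with the two factors $2^{d}$ produced above, is the source of the constant $4^{d}$.
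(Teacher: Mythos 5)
Your argument is correct: the dyadic blocking of the index set by the level of $b_{\mathbf{\ell}}^{-r}$, the application of the hypothesis at the upper corner of each block, and the two geometric summations each contributing a factor $2^{d}$ yield exactly the constant $4^{d}$ (modulo the harmless choice of $c_{1}^{(i)}$ rather than $c_{0}^{(i)}$ as the reference level). The paper does not actually prove Proposition~\ref{prop1} --- it is quoted from Nosz\'aly and T\'om\'acs --- but its proof of the analogous Proposition~\ref{prop2} is the same blocking argument with a general ratio $c>1$ optimized at the end via $\min_{c>1}c^{r}/(1-c^{-r})=4$, and your dyadic blocks on the scale of $b^{-r}$ correspond precisely to that optimal $c=2^{1/r}$.
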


\noindent From this, they were led to the following general SLLN for random
fields.

\begin{theorem}
\label{theo1} Let $a_{\mathbf{n}},b_{\mathbf{n}}$ be non-negative 
d-sequences and let $r>0$. Suppose that $b_{\mathbf{n}}$ is a positive,
nondecreasing, unbounded d-sequence of product type. Let us assume that 
\begin{equation*}
\sum_{\mathbf{n}}\frac{a_{\mathbf{n}}}{b_{\mathbf{n}}^{r}}<+\infty
\end{equation*}%
and

\begin{equation*}
\forall \mathbf{n}\in \mathbb{\mathbf{N}}^{d},\ \ \ \mathbb{E}\left( \max_{%
\mathbf{m}\leq \mathbf{n}}|S_{\mathbf{m}}|^{r}\right) \leq \sum_{\mathbf{m}%
\leq \mathbf{n}}a_{\mathbf{m}}.
\end{equation*}%
Then 
\begin{equation*}
\lim_{\mathbf{n}\rightarrow +\infty }\frac{S_{\mathbf{n}}}{b_{\mathbf{n}}}%
=0\ \ a.s.
\end{equation*}
\end{theorem}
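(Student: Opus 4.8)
\bigskip

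\noindent The plan is to convert the moment hypothesis into a H\'ajek--R\'enyi maximal inequality by means of Proposition~\ref{prop1}, and then to control the tail of the field $(S_{\mathbf{\ell}}/b_{\mathbf{\ell}})$ by decomposing along the partial order and invoking a Kronecker--type lemma. First I feed the second assumption into Proposition~\ref{prop1} (with the given $a_{\mathbf{n}},b_{\mathbf{n}},r$), getting, for every $\mathbf{n}$,
\begin{equation*}
\mathbb{E}\Bigl(\max_{\mathbf{\ell}\le\mathbf{n}}\,|S_{\mathbf{\ell}}|^{r}b_{\mathbf{\ell}}^{-r}\Bigr)\le 4^{d}\sum_{\mathbf{\ell}\le\mathbf{n}}a_{\mathbf{\ell}}b_{\mathbf{\ell}}^{-r}\le 4^{d}\sum_{\mathbf{\ell}}a_{\mathbf{\ell}}b_{\mathbf{\ell}}^{-r}<\infty .
\end{equation*}
Letting $\mathbf{n}\to\infty$, the monotone convergence theorem gives $\mathbb{E}\bigl(\sup_{\mathbf{\ell}}|S_{\mathbf{\ell}}|^{r}b_{\mathbf{\ell}}^{-r}\bigr)<\infty$, so that the random variables $W_{\mathbf{k}}:=\sup_{\mathbf{\ell}\ge\mathbf{k}}|S_{\mathbf{\ell}}|^{r}b_{\mathbf{\ell}}^{-r}$, which decrease to $\limsup_{\mathbf{n}}|S_{\mathbf{n}}|^{r}b_{\mathbf{n}}^{-r}$ as $\mathbf{k}\to\infty$, are all dominated by the integrable $W_{\mathbf{1}}$. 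By dominated convergence it therefore suffices to prove that $\mathbb{E}(W_{\mathbf{k}})\to 0$ as $\mathbf{k}\to\infty$, and then the assertion follows since $0\le\liminf\le\limsup$.

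The core of the argument is the estimation of $\mathbb{E}(W_{\mathbf{k}})$, and this is where the partial order bites: for $\mathbf{\ell}\ge\mathbf{k}$ the increment $S_{\mathbf{\ell}}-S_{\mathbf{k}}$ is \emph{not} a sum over a rectangle, so one must write $S_{\mathbf{\ell}}=\sum_{I\subseteq\{1,\dots,d\}}U^{I}_{\mathbf{\ell},\mathbf{k}}$, where the block $U^{I}_{\mathbf{\ell},\mathbf{k}}$ is the sum of the $X_{\mathbf{m}}$ over $\{\mathbf{m}:k_{i}<m_{i}\le\ell_{i}\ (i\in I),\ 1\le m_{i}\le k_{i}\ (i\notin I)\}$. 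The term $I=\emptyset$ equals $S_{\mathbf{k}}$, and $\sup_{\mathbf{\ell}\ge\mathbf{k}}|S_{\mathbf{k}}|^{r}b_{\mathbf{\ell}}^{-r}=|S_{\mathbf{k}}|^{r}b_{\mathbf{k}}^{-r}$, whose expectation is at most $b_{\mathbf{k}}^{-r}\sum_{\mathbf{m}\le\mathbf{k}}a_{\mathbf{m}}$ by the hypothesis; this tends to $0$ by the multidimensional Kronecker lemma, and it is exactly here that ``$b_{\mathbf{n}}$ of product type, nondecreasing, unbounded'' is used. For nonempty $I$ the block $U^{I}_{\mathbf{\ell},\mathbf{k}}$ depends only on $(\ell_{i})_{i\in I}$; expanding it by the rectangular inclusion--exclusion formula one reads off from the hypothesis a maximal moment inequality for the associated shifted $|I|$-dimensional partial-sum field, and feeding it into Proposition~\ref{prop1} in dimension $|I|$ with the product-type weights $(b^{(i)})_{i\in I}$, while bounding the frozen factors $b^{(i)}_{\ell_{i}}\ge b^{(i)}_{k_{i}}$ for $i\notin I$, leads to an estimate of the shape
\begin{equation*}
\mathbb{E}\Bigl(\sup_{\mathbf{\ell}\ge\mathbf{k}}|U^{I}_{\mathbf{\ell},\mathbf{k}}|^{r}b_{\mathbf{\ell}}^{-r}\Bigr)\le C_{d,r}\,b_{\mathbf{k}}^{-r}\!\!\sum_{\mathbf{m}\le\mathbf{k}}a_{\mathbf{m}}\;+\;C_{d,r}\!\!\!\sum_{\mathbf{m}\,:\,m_{i}>k_{i}\;\forall i\in I}\!\!\!\frac{a_{\mathbf{m}}}{b_{\mathbf{m}}^{r}},
\end{equation*}
where again the first term $\to 0$ by Kronecker and the second is a genuine tail of the convergent series $\sum a_{\mathbf{m}}b_{\mathbf{m}}^{-r}$, hence also $\to 0$. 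Summing the $2^{d}$ blocks, with $\bigl|\sum_{i}x_{i}\bigr|^{r}\le(2^{d})^{r}\sum_{i}|x_{i}|^{r}$, gives $\mathbb{E}(W_{\mathbf{k}})\to 0$.

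The main obstacle is precisely this last step: organizing the $2^{d}$-term decomposition and, block by block, producing the correct maximal moment inequality for the associated shifted subfield out of the global hypothesis --- the ``corner'' contributions that inevitably appear are what force the appeal to a \emph{multidimensional} Kronecker lemma and what make the product-type hypothesis on $b$ indispensable. A cleaner alternative route avoids the decomposition: using $\sum a_{\mathbf{n}}b_{\mathbf{n}}^{-r}<\infty$ together with the product-type form of $b$, construct coordinatewise a positive, nondecreasing, unbounded product-type $d$-sequence $c_{\mathbf{n}}$ with $c_{\mathbf{n}}/b_{\mathbf{n}}\to 0$ and $\sum a_{\mathbf{n}}c_{\mathbf{n}}^{-r}<\infty$ (possible by the classical fact that a convergent series of nonnegative reals can be multiplied by a factor tending to infinity, applied to the marginal sums in each coordinate and recombined via the arithmetic--geometric mean inequality), apply the first two steps with $c$ in place of $b$ to obtain $M:=\sup_{\mathbf{\ell}}|S_{\mathbf{\ell}}|/c_{\mathbf{\ell}}<\infty$ a.s., and conclude from $|S_{\mathbf{n}}|/b_{\mathbf{n}}\le M\,(c_{\mathbf{n}}/b_{\mathbf{n}})\to 0$ a.s.
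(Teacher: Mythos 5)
Your closing ``cleaner alternative route'' is, in fact, the proof --- and it is precisely the route this paper takes (for the probability analogue, Theorem~\ref{theo4}, and implicitly for the present statement): the auxiliary product-type sequence $c_{\mathbf{n}}$ with $c_{\mathbf{n}}/b_{\mathbf{n}}\to 0$ and $\sum a_{\mathbf{n}}c_{\mathbf{n}}^{-r}<\infty$ is exactly the $\beta_{\mathbf{n}}$ of Lemma~\ref{lemma1} (quoted from Nosz\'aly--T\'om\'acs, so you need not reprove it), and feeding it into Proposition~\ref{prop1} plus monotone convergence gives $\mathbb{E}\bigl(\sup_{\mathbf{\ell}}|S_{\mathbf{\ell}}|^{r}\beta_{\mathbf{\ell}}^{-r}\bigr)<\infty$, hence $\sup_{\mathbf{\ell}}|S_{\mathbf{\ell}}|\beta_{\mathbf{\ell}}^{-1}<\infty$ a.s., and then $|S_{\mathbf{n}}|/b_{\mathbf{n}}\le\bigl(\sup_{\mathbf{\ell}}|S_{\mathbf{\ell}}|\beta_{\mathbf{\ell}}^{-1}\bigr)\,\beta_{\mathbf{n}}/b_{\mathbf{n}}\to 0$ a.s. In the moment setting this is even slightly simpler than the paper's argument for Theorem~\ref{theo4}, since a finite expectation of the supremum replaces the tail-probability gymnastics of Lemmas~\ref{lemma2} and~\ref{lemma3}. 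You should promote this paragraph to be the proof and demote the rest to a remark.

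The route you present as the main argument is not, as written, a proof. The reduction to $\mathbb{E}(W_{\mathbf{k}})\to 0$ is fine, but the central estimate for the blocks $U^{I}_{\mathbf{\ell},\mathbf{k}}$ is only asserted ``of the shape'' and you yourself flag it as the main obstacle. Two concrete things are missing there. First, the hypothesis $\mathbb{E}\bigl(\max_{\mathbf{m}\le\mathbf{n}}|S_{\mathbf{m}}|^{r}\bigr)\le\sum_{\mathbf{m}\le\mathbf{n}}a_{\mathbf{m}}$ is anchored at the origin; after inclusion--exclusion, the maximal moment bound you can extract for the shifted $|I|$-dimensional field is not of the tail form $\sum_{\mathbf{k}_{I}<\mathbf{m}_{I}\le\mathbf{\ell}_{I}}\tilde a_{\mathbf{m}_{I}}$ required by Proposition~\ref{prop1}: it carries a constant offset $\sum_{\mathbf{m}\le\mathbf{k}}a_{\mathbf{m}}$ that has to be absorbed into the first term of $\tilde a$ and then killed by a \emph{multidimensional} Kronecker lemma ($b_{\mathbf{k}}^{-r}\sum_{\mathbf{m}\le\mathbf{k}}a_{\mathbf{m}}\to 0$), which is itself a nontrivial statement requiring the product-type structure of $b$ and a proof (it does hold for nonnegative $a$, by splitting the sum at a fixed $\mathbf{N}$, but you must say so). Second, you need the elementary inequality $|\sum_i x_i|^r\le C\sum_i|x_i|^r$ with a constant depending on whether $r\le 1$ or $r>1$; the constant $(2^d)^r$ you quote is not right for $r\le 1$ (there one uses subadditivity of $t\mapsto t^{r}$ instead). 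None of this is fatal --- the decomposition can be pushed through --- but as it stands the key inequality is a claim, not a derivation, so the burden of proof rests entirely on the alternative route.
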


\noindent On an other hand, T\'{o}m\'{a}cs and L\'{i}bor \cite{thomas06},
introduced a H\'{a}jek-R\'{e}nyi inequality for probabilities and,
subsequently, strong laws of large numbers for random sequences but not for
random fields. They obtained first :

\begin{theorem}
\label{theo2} Let $r$ be a positive real number, \ $a_{n}$ \ be a sequence
of nonnegative real numbers. Then the following two statements are
equivalent. \newline
(i) There exists \ $C>0$ \ such that for any \ ${n}\in \mathbb{\mathbf{N}}$
\ and any \ $\varepsilon >0$ 
\begin{equation*}
\mathbb{P}({\max_{{\ell}\leq {n}}}|S_{\ell}|\geq
\varepsilon )\leq C\varepsilon ^{-r}{\sum_{{\ell}\leq {n}}}a_{\ell}.
\end{equation*}%
(ii) There exists $C>0$ such that for any nondecreasing sequence $(b_{n})_{n \in \textbf{N}}$ of positive real numbers, for any ${n}\in 
\mathbb{\mathbf{N}}$ and any \ $\varepsilon >0$ 
\begin{equation*}
\mathbb{P}\left( {\max_{{\ell}\leq {n}}}%
|S_{\ell}|b_{\ell}^{-1}\geq \varepsilon \right) \leq C\varepsilon ^{-r}{\sum_{{\ell}%
\leq {n}}}a_{\ell}b_{\ell}^{-r}.
\end{equation*}
\end{theorem}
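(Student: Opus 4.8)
The plan is to prove the two implications separately. The direction $(ii)\Rightarrow(i)$ is immediate: applying $(ii)$ to the constant sequence $b_n\equiv 1$ reproduces exactly the inequality of $(i)$, with the same constant $C$. So the entire content lies in $(i)\Rightarrow(ii)$, which I would prove by a H\'ajek--R\'enyi type dyadic blocking argument carried out on the values of $(b_\ell)$, in the spirit of the classical proof for random sequences.

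Fix $n\in\mathbb{N}$, $\varepsilon>0$, and a nondecreasing sequence $(b_\ell)$ of positive reals; write $M_n=\max_{\ell\le n}|S_\ell|b_\ell^{-1}$ and $\beta=\min_{\ell\le n}b_\ell>0$. Fix a parameter $c>1$ (to be taken equal to $2$ at the end). For $j\ge 0$ set $\nu_j=\max\{\ell\le n:\ b_\ell<c^{j+1}\beta\}$, and let $J$ be the smallest index with $\nu_j=n$; this exists because $(b_\ell)_{\ell\le n}$ takes only finitely many values. Then the (possibly empty) blocks $I_0=\{\ell:\ell\le\nu_0\}$ and $I_j=\{\ell:\nu_{j-1}<\ell\le\nu_j\}$ for $1\le j\le J$ partition the index range, and by the very choice of the $\nu_j$ one has the two-sided control $c^{j}\beta\le b_\ell<c^{j+1}\beta$ for every $\ell\in I_j$.

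The key observation is that on the event $\{M_n\ge\varepsilon\}$ there is some $j\le J$ and some $\ell\in I_j$ with $|S_\ell|\ge\varepsilon b_\ell\ge\varepsilon c^{j}\beta$, and since $\ell\le\nu_j$ this forces $\max_{m\le\nu_j}|S_m|\ge\varepsilon c^{j}\beta$. A union bound followed by hypothesis $(i)$ applied at index $\nu_j$ with threshold $\varepsilon c^{j}\beta$ then gives
\begin{equation*}
\mathbb{P}(M_n\ge\varepsilon)\le\sum_{j=0}^{J}\mathbb{P}\Bigl(\max_{m\le\nu_j}|S_m|\ge\varepsilon c^{j}\beta\Bigr)\le C\varepsilon^{-r}\beta^{-r}\sum_{j=0}^{J}c^{-jr}\sum_{i=0}^{j}\Bigl(\,\sum_{m\in I_i}a_m\Bigr).
\end{equation*}
Interchanging the summations over $i$ and $j$, using $\sum_{j\ge i}c^{-jr}\le c^{-ir}(1-c^{-r})^{-1}$, and finally invoking the upper bound $b_m<c^{i+1}\beta$ on each block in the form $c^{-ir}<c^{r}\beta^{r}b_m^{-r}$, the right-hand side is bounded by
\begin{equation*}
\frac{C\,c^{r}}{1-c^{-r}}\,\varepsilon^{-r}\sum_{m\le n}a_m b_m^{-r},
\end{equation*}
which is precisely the inequality of $(ii)$ with $C'=C\,c^{r}/(1-c^{-r})$; choosing $c=2$ yields the explicit constant $C'=C\,2^{r}/(1-2^{-r})$.

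The argument has no genuinely hard step; the one place that requires care is the bookkeeping of the blocking — checking that the $\nu_j$ are well defined and nondecreasing, that the $I_j$ really exhaust the index range even when several of them are empty, and above all that the two-sided estimate $c^{j}\beta\le b_\ell<c^{j+1}\beta$ holds on $I_j$. The lower bound is what converts $M_n\ge\varepsilon$ into a statement about $\max_{m\le\nu_j}|S_m|$, while the upper bound is what lets one pass from the geometric weights $c^{-jr}$ back to the genuine weights $b_m^{-r}$ in the last line; everything else is a routine rearrangement of a convergent geometric series.
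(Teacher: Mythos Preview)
Your proof is correct and follows essentially the same route as the paper. The paper does not prove Theorem~\ref{theo2} in isolation (it is quoted from \cite{thomas06}), but its proof of Proposition~\ref{prop2} specializes to exactly this argument when $d=1$: geometric blocking of the range of $b_\ell$ by powers of a base $c>1$, a union bound over blocks, application of $(i)$ on $[0,\nu_j]$ with threshold $\varepsilon c^{j}\beta$, interchange of the double sum, a geometric series estimate, and finally the conversion $c^{-ir}<c^{r}\beta^{r}b_m^{-r}$ on each block. The only cosmetic differences are that the paper normalizes so that $b_{\mathbf 0}=1$ (your $\beta$ plays the same role), and that the paper minimizes $c^{r}/(1-c^{-r})$ over $c>1$ to obtain the constant $4$ (attained at $c=2^{1/r}$), whereas you simply fix $c=2$; since the statement only asks for \emph{some} constant $C'$, either choice suffices.
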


\noindent And next, they derived from it this SLLN.

\begin{theorem}
\label{theo3} Let $a_{n}\ and\ b_{n}$ are non-negative sequences of real
numbers and let $r>0$. Suppose that $b_{n}$ is a positive non-decreasing,
unbounded sequence of positive real numbers. Let us assume that 
\begin{equation*}
\sum_{n}\frac{a_{n}}{b_{n}^{r}}<+\infty
\end{equation*}%
and there exists $C>0$ such that for any ${n}\in \mathbb{\mathbf{N}}$ and
any $\varepsilon >0$ 
\begin{equation*}
\mathbb{P}\left( \max_{{m}\leq {n}}|S_{m}|\geq \varepsilon \right) \leq C\
\varepsilon ^{-r}\sum_{{m}\leq {n}}a_{m}.
\end{equation*}%
Then 
\begin{equation*}
\lim_{{n}\rightarrow +\infty }\frac{S_{n}}{b_{n}}=0\ \ a.s
\end{equation*}
\end{theorem}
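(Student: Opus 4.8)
The plan is to derive the strong law directly from the probabilistic H\'ajek--R\'enyi inequality of \thmref{theo2}, after truncating the normalizing sequence and invoking Kronecker's lemma.

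First I would note that the standing hypothesis of the theorem is exactly assertion (i) of \thmref{theo2}; hence assertion (ii) holds, i.e.\ there is a constant $C>0$ (which I still denote $C$) such that for every nondecreasing sequence of positive reals $(c_{n})_{n}$, every $n\in\mathbb{N}$ and every $\varepsilon>0$,
\begin{equation*}
\mathbb{P}\Big(\max_{\ell\le n}|S_{\ell}|\,c_{\ell}^{-1}\ge\varepsilon\Big)\le C\varepsilon^{-r}\sum_{\ell\le n}a_{\ell}c_{\ell}^{-r}.
\end{equation*}
Next, for a fixed $N$ I would apply this with the truncated weights $c_{\ell}=b_{\max(\ell,N)}$, which equal $b_{N}$ for $\ell\le N$ and $b_{\ell}$ for $\ell\ge N$ and therefore form a positive nondecreasing sequence. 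For $n\ge N$ the left-hand side is at least $\mathbb{P}\big(\max_{N\le\ell\le n}|S_{\ell}|b_{\ell}^{-1}\ge\varepsilon\big)$, while the right-hand side is at most $C\varepsilon^{-r}\big(b_{N}^{-r}\sum_{\ell\le N}a_{\ell}+\sum_{\ell>N}a_{\ell}b_{\ell}^{-r}\big)$. Letting $n\to\infty$ along the increasing events and then comparing with $\Lambda:=\limsup_{\ell}|S_{\ell}|/b_{\ell}=\inf_{N}\sup_{\ell\ge N}|S_{\ell}|/b_{\ell}$, I obtain for every $\varepsilon>0$ and every $N$
\begin{equation*}
\mathbb{P}(\Lambda>\varepsilon)\le C\varepsilon^{-r}\Big(b_{N}^{-r}\sum_{\ell\le N}a_{\ell}+\sum_{\ell>N}a_{\ell}b_{\ell}^{-r}\Big).
\end{equation*}

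To finish, I would let $N\to\infty$: the term $\sum_{\ell>N}a_{\ell}b_{\ell}^{-r}$ is the tail of the convergent series $\sum_{\ell}a_{\ell}/b_{\ell}^{r}$, hence tends to $0$; and $b_{N}^{-r}\sum_{\ell\le N}a_{\ell}\to0$ by Kronecker's lemma applied with the nondecreasing unbounded sequence $b_{\ell}^{r}$ and the convergent series $\sum_{\ell}a_{\ell}/b_{\ell}^{r}$. Thus $\mathbb{P}(\Lambda>\varepsilon)=0$ for each $\varepsilon>0$, and since $\{\Lambda>0\}=\bigcup_{k}\{\Lambda>1/k\}$ this gives $\Lambda=0$ a.s., that is, $S_{n}/b_{n}\to0$ a.s.

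The only genuinely delicate point is the choice of the truncated weights: a direct application of \thmref{theo2} with $c_{\ell}=b_{\ell}$ merely bounds $\mathbb{P}(\sup_{\ell}|S_{\ell}|/b_{\ell}\ge\varepsilon)$ by a \emph{fixed} finite quantity, which does not force the $\limsup$ to vanish. Truncating at level $N$ is what isolates a tail that can be made arbitrarily small, at the price of the head term $b_{N}^{-r}\sum_{\ell\le N}a_{\ell}$, which is exactly what Kronecker's lemma disposes of. Measurability of the suprema and the passages to the limit in the events are routine; the same scheme, with product-type $d$-sequences and the forthcoming random-field version of \thmref{theo2}, will later yield the field analogue.
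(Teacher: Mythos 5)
Your proof is correct, but it follows a different route from the one the paper uses (the paper does not reprove Theorem~\ref{theo3} itself; its own argument for the analogous statement is the proof of Theorem~\ref{theo4}, which specializes to $d=1$). The paper's scheme goes through Lemma~\ref{lemma1}: it manufactures an auxiliary positive, nondecreasing, unbounded sequence $\beta_{n}$ with $\beta_{n}/b_{n}\rightarrow 0$ and $\sum_{n}a_{n}\beta_{n}^{-r}<\infty$, applies the H\'ajek--R\'enyi inequality with $\beta$ in place of $b$ to conclude $\sup_{n}|S_{n}|\beta_{n}^{-1}<\infty$ a.s.\ (via Lemmas~\ref{lemma2} and~\ref{lemma3}), and then writes $|S_{n}|/b_{n}=(|S_{n}|/\beta_{n})(\beta_{n}/b_{n})\rightarrow 0$. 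You instead keep $b$ itself but truncate it, applying Theorem~\ref{theo2}(ii) to $c_{\ell}=b_{\max(\ell,N)}$, and you dispose of the head term $b_{N}^{-r}\sum_{\ell\le N}a_{\ell}$ by Kronecker's lemma while the tail $\sum_{\ell>N}a_{\ell}b_{\ell}^{-r}$ vanishes as the tail of a convergent series. Both arguments are sound; the key uniformity you need --- that the constant in Theorem~\ref{theo2}(ii) does not depend on the chosen nondecreasing sequence --- is exactly what statement (ii) provides, and your truncated weights are indeed positive and nondecreasing. Your route is more elementary in that it avoids the auxiliary-sequence lemma entirely and yields an explicit quantitative bound on $\mathbb{P}\bigl(\sup_{\ell\ge N}|S_{\ell}|/b_{\ell}>\varepsilon\bigr)$; the price is the appeal to Kronecker's lemma, which is harmless here but becomes the delicate point if one tries to run the same scheme for $d$-fold multiple sums, which is precisely why the paper prefers the $\beta_{n}$ device in the random-field setting.
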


\noindent As said previously, this paper aims at generalizing the previous
results in the following way. First, we give a random fields version for T\'{o}m%
\'{a}cs and L\'{i}bor \cite{thomas06} as a first generalization in Proposition %
\ref{prop2} . Next we show that our version of H\'{a}jek-R\'{e}nyi type maximal
inequality for probabilities for random fields is a generalization of that of
Nosz\'{a}ly and T\'{o}m\'{a}cs \cite{noszaly} and leads to a more general SLLN.%
\newline

\noindent We apply our method for logarithmically weighted sums without any
moment assumption and under general dependence conditions for random fields. This shows that
the generalization is not trivial.\newline

\noindent The paper is organized as follows. Section \ref{sec2} is devoted
to our main results, a H\'{a}jek-R\'{e}nyi type maximal inequality for
probabilities for random fields and automatically a strong law of large
numbers are given. Section \ref{sec3} includes their proofs. Section \ref%
{sec4} including applications and illustration of our results, concludes the
paper.

\section{RESULTS}

\label{sec2}

\noindent We first give a H\'{a}jek-R\'{e}nyi type maximal inequality for
probabilities for random fields, as an extension of Proposition 1 in 
Nosz\'{a}ly and T\'{o}m\'{a}cs \cite{noszaly} and of Theorem 2.1 in T\'{o}m\'{a}%
cs and L\'{i}bor \cite{thomas06}.

\begin{proposition}
\label{prop2} Let \ $r$ \ be a positive real number, \ $a_{\mathbf{n}}$ \ be
a nonnegative d-sequence. Suppose that \ $b_{\mathbf{n}}$ \ is a positive,
nondecreasing d-sequence of product type. Then the following two
statements are equivalent \newline
(i) There exists \ $C>0$ \ such that for any \ $\mathbf{n}\in \mathbb{%
\mathbf{N}}^{d}$ \ and any \ $\varepsilon >0$ 
\begin{equation*}
\mathbb{P}({\max_{\mathbf{\ell}\leq 
\mathbf{n}}}|S_{\mathbf{\ell}}|\geq \varepsilon )\leq C\varepsilon ^{-r}{\sum_{%
\mathbf{\ell}\leq \mathbf{n}}}a_{\mathbf{\ell}}\ \ 
\end{equation*}%
(ii) There exists \ $C>0$ \ such for any \ $\mathbf{n}\in \mathbb{\mathbf{N}}%
^{d}$ \ and any \ $\varepsilon >0$ 
\begin{equation*}
\mathbb{P}\left( {\max_{\mathbf{\ell}%
\leq \mathbf{n}}}|S_{\mathbf{\ell}}|b_{\mathbf{\ell}}^{-1}\geq \varepsilon \right)
\leq 4^{d} \ \ C\  \varepsilon ^{-r}\ {\sum_{{\mathbf \ell}\leq \mathbf{n}}}a_{\mathbf{%
\ell}}b_{{\bf \ell}}^{-r}\ \ 
\end{equation*}
\end{proposition}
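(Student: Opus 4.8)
The plan is to prove the two implications separately, with the substance residing in (i) $\Rightarrow$ (ii); the reverse implication (ii) $\Rightarrow$ (i) is immediate by specializing to the constant $d$-sequence $b_{\mathbf{n}}\equiv 1$, which is trivially positive, nondecreasing and of product type, so that (ii) reads exactly as (i) with the same constant $C$ (the factor $4^d$ being harmless). For (i) $\Rightarrow$ (ii), the idea is to mimic the one-dimensional dyadic-blocking argument of T\'om\'acs and L\'ibor \cite{thomas06}, carried out coordinate by coordinate, exploiting the product structure $b_{\mathbf{n}}=\prod_{1\le i\le d}b_{n_i}^{(i)}$.

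First I would fix $\mathbf{n}=(n_1,\dots,n_d)$ and $\varepsilon>0$. For each coordinate $i$, choose an integer $N_i$ with $2^{N_i}\ge n_i$, and for each index $\mathbf{\ell}=(\ell_1,\dots,\ell_d)\le\mathbf{n}$ locate it in a dyadic box: let $k_i=k_i(\ell_i)$ be the integer with $2^{k_i-1}<\ell_i\le 2^{k_i}$ (with the obvious convention at $\ell_i=1$). Then $b^{(i)}_{\ell_i}\ge b^{(i)}_{2^{k_i-1}}$ by monotonicity, hence $b_{\mathbf{\ell}}^{-1}\le \prod_i (b^{(i)}_{2^{k_i-1}})^{-1}$. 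Consequently the event $\{|S_{\mathbf{\ell}}|b_{\mathbf{\ell}}^{-1}\ge\varepsilon\}$ is contained, for $\mathbf{\ell}$ in the dyadic block indexed by $\mathbf{k}=(k_1,\dots,k_d)$, in $\{|S_{\mathbf{\ell}}|\ge\varepsilon\prod_i b^{(i)}_{2^{k_i-1}}\}$. Taking a union over the at most $\prod_i(N_i+1)$ blocks and applying hypothesis (i) to each sub-maximum $\max_{\mathbf{\ell}\le (2^{k_1},\dots,2^{k_d})}|S_{\mathbf{\ell}}|$ gives
\begin{equation*}
\mathbb{P}\Bigl(\max_{\mathbf{\ell}\le\mathbf{n}}|S_{\mathbf{\ell}}|b_{\mathbf{\ell}}^{-1}\ge\varepsilon\Bigr)
\le C\varepsilon^{-r}\sum_{\mathbf{k}}\Bigl(\prod_{i} (b^{(i)}_{2^{k_i-1}})^{-r}\Bigr)\sum_{\mathbf{\ell}\le(2^{k_1},\dots,2^{k_d})}a_{\mathbf{\ell}},
\end{equation*}
where $\mathbf{k}$ ranges over $0\le k_i\le N_i$.

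The heart of the argument is then the combinatorial/analytic step of re-summing the right-hand side into $4^d\,C\,\varepsilon^{-r}\sum_{\mathbf{\ell}\le\mathbf{n}}a_{\mathbf{\ell}}b_{\mathbf{\ell}}^{-r}$. I would interchange the order of summation: each fixed $\mathbf{\ell}$ contributes $a_{\mathbf{\ell}}$ to every block $\mathbf{k}$ with $2^{k_i}\ge\ell_i$ for all $i$, i.e. $k_i\ge k_i(\ell_i)$, so its total coefficient is $\prod_i\sum_{k_i\ge k_i(\ell_i)}(b^{(i)}_{2^{k_i-1}})^{-r}$. Because $b^{(i)}$ is nondecreasing, the dyadic values $b^{(i)}_{2^{k_i-1}}$ themselves form a nondecreasing sequence, and the standard one-dimensional estimate (the same one underlying the scalar Theorem~\ref{theo2}) gives $\sum_{k_i\ge k_i(\ell_i)}(b^{(i)}_{2^{k_i-1}})^{-r}\le 4\,(b^{(i)}_{\ell_i})^{-r}$; here the factor $4=2^r\cdot 2^{?}$ is absorbed exactly as in the scalar case using $b^{(i)}_{\ell_i}\le b^{(i)}_{2^{k_i(\ell_i)}}$ and $b^{(i)}_{2^{k_i-1}}\ge b^{(i)}_{2^{k_i(\ell_i)-1}}$ together with a geometric-series bound. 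Multiplying the $d$ coordinate bounds yields the factor $4^d$ and the product $\prod_i (b^{(i)}_{\ell_i})^{-r}=b_{\mathbf{\ell}}^{-r}$, completing (i) $\Rightarrow$ (ii).

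The main obstacle I anticipate is bookkeeping rather than conceptual: making the coordinatewise dyadic decomposition and the interchange of summation rigorous in $d$ dimensions, and verifying that the one-dimensional geometric-series bound producing the constant $4$ genuinely holds for an arbitrary nondecreasing sequence (not merely a strictly increasing one), so that the product over $i$ of these bounds delivers precisely $4^d$. A secondary point to handle carefully is the boundary behavior at $\ell_i=1$ and the convention for $b^{(i)}_{2^{k_i-1}}$ when $k_i=0$; one just sets $b^{(i)}_{2^{-1}}:=b^{(i)}_{1}$, which does not affect monotonicity and keeps all the estimates intact. Once the scalar lemma is isolated and applied in each coordinate, everything else is a routine product of one-dimensional arguments.
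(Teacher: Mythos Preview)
Your reduction of (ii) to (i) is fine.  The proposal for (i) $\Rightarrow$ (ii), however, contains a genuine gap at exactly the step you flag as ``the main obstacle'': the inequality
\[
\sum_{k_i\ge k_i(\ell_i)}\bigl(b^{(i)}_{2^{k_i-1}}\bigr)^{-r}\;\le\;4\,\bigl(b^{(i)}_{\ell_i}\bigr)^{-r}
\]
is \emph{false} for an arbitrary nondecreasing sequence $b^{(i)}$, and no geometric-series argument can save it.  Take $b^{(i)}_n\equiv 1$ (constant sequences are allowed --- the proposition does not assume unboundedness).  Then every term on the left equals $1$, so the left side is $N_i-k_i(\ell_i)+1$, which is unbounded as $n_i\to\infty$, whereas the right side is $4$.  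The same problem arises whenever $b^{(i)}$ grows subgeometrically along the dyadic subsequence; for instance if $b^{(i)}_n=\log^{+} n$, then $b^{(i)}_{2^{k}}\asymp k$ and for $r\le 1$ the sum diverges, while for $r>1$ it converges but with a constant depending on $r$, not the universal $4$.  The trouble is structural: a dyadic partition of the \emph{index} set gives no control on the ratio $b^{(i)}_{2^{k}}/b^{(i)}_{2^{k-1}}$, hence no genuine geometric decay.

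The paper's proof avoids this by partitioning according to the \emph{values} of $b$, not the indices.  One fixes $c>1$ and sets
\[
\mathcal{A}_{\mathbf{i},\mathbf{n}}=\bigl\{\mathbf{s}\le\mathbf{n}:\;c^{i_j}\le b^{(j)}_{s_j}<c^{i_j+1},\ j=1,\dots,d\bigr\}.
\]
On each such set one has $b_{\mathbf s}\ge c^{\langle\mathbf i\rangle}$ by construction, and after the union bound and interchange of summation the resulting series in $\mathbf{i}$ is a genuine geometric series $\sum_{i_j\ge m_j}c^{-r i_j}=c^{-r m_j}/(1-c^{-r})$.  This yields the constant $\bigl(c^{r}/(1-c^{-r})\bigr)^{d}$, and minimizing over $c>1$ gives exactly $4^{d}$.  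Your dyadic-index scheme can be repaired only by switching to this level-set decomposition; the rest of your outline (union bound over blocks, application of (i) on each block, Fubini) then goes through verbatim.
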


\noindent We derive from this proposition  a general strong law of large
numbers for random fields which includes extensions of Theorem 3 in Nosz\'{a}%
ly and T\'{o}m\'{a}cs \cite{noszaly} and of Theorem 2.4 in T\'{o}m\'{a}cs
and L\'{i}bor \cite{thomas06}. But we need this lemma first.\newline

\begin{lemma}
\label{lemma1} (See Lemma 2 in Nosz\'{a}ly and T\'{o}m\'{a}cs \cite{noszaly}%
) Let \ $a_{\mathbf{n}}$ \ be a nonnegative d-sequence and let \ $b_{%
\mathbf{n}}$ \ be a positive, nondecreasing, unbounded d-sequence of
product type. Suppose that \ ${\sum_{\mathbf{n}}}\frac{a_{\mathbf{n}}}{b_{%
\mathbf{n}}^{r}}<+\infty $ \ with a fixed real \ $r>0.$ Then there exists a
positive, nondecreasing, unbounded d-sequence \ $\beta _{\mathbf{n}}$ \
of product type for which 
\begin{equation*}
{\lim_{\mathbf{n}}}\ \frac{\beta _{\mathbf{n}}}{b_{\mathbf{n}}}=0\ \text{\
and }{\sum_{\mathbf{n}}}\frac{a_{\mathbf{n}}}{\beta _{\mathbf{n}}^{r}}%
<+\infty .
\end{equation*}
\end{lemma}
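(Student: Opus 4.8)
The plan is to peel off the coordinates one at a time, so that the whole argument rests on a one-dimensional statement which I then iterate, using the fact that $b_{\mathbf n}$ is of product type; this is essentially the route behind Lemma~2 of \cite{noszaly}, and I would recall it as follows. Throughout set $c_{\mathbf n}=a_{\mathbf n}/b_{\mathbf n}^{r}\ge 0$, so the hypothesis reads $\sum_{\mathbf n}c_{\mathbf n}<+\infty$, and note that it suffices to build real sequences $\beta^{(1)},\dots,\beta^{(d)}$, each positive, nondecreasing and unbounded with $\beta^{(i)}_{n_i}/b^{(i)}_{n_i}\to 0$, such that $\sum_{\mathbf n}c_{\mathbf n}\prod_{1\le i\le d}\bigl(b^{(i)}_{n_i}/\beta^{(i)}_{n_i}\bigr)^{r}<+\infty$: indeed then $\beta_{\mathbf n}:=\prod_{i}\beta^{(i)}_{n_i}$ is of product type, is positive, nondecreasing and unbounded (by the criterion recalled in \S\ref{sec1}), satisfies $\beta_{\mathbf n}/b_{\mathbf n}=\prod_{i}(\beta^{(i)}_{n_i}/b^{(i)}_{n_i})\to 0$, and $\sum_{\mathbf n}a_{\mathbf n}/\beta_{\mathbf n}^{r}$ is precisely the displayed series.

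First I would settle the one-variable core: given $c_n\ge 0$ with $\sum_n c_n<+\infty$ and $b_n$ positive, nondecreasing and unbounded, produce a positive, nondecreasing, unbounded $\beta_n$ with $\beta_n/b_n\to 0$ and $\sum_n c_n(b_n/\beta_n)^{r}<+\infty$. Slice $\mathbb{N}$ (after discarding the finitely many indices with $b_n\le 1$) into the level blocks $I_p=\{n:\,2^{p-1}<b_n\le 2^{p}\}$, which are intervals because $b_n$ increases, and put $\sigma_p=\sum_{n\in I_p}c_n$, so $\sum_p\sigma_p<+\infty$ and $b_n$ varies by less than a factor $2$ on each $I_p$. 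Let $\tau_p=2^{-p}+\sum_{q\ge p}\sigma_q>0$ (nonincreasing, $\to 0$) and $u_p=\tau_p^{1/(2r)}$; the classical tail bound $\sum_p\sigma_p u_p^{-r}\le\sum_p(\tau_p-\tau_{p+1})\tau_p^{-1/2}\le\int_0^{\tau_1}t^{-1/2}\,dt<+\infty$ is what will control the final series once $\beta$ restricted to $I_p$ is comparable in size to $2^{p}u_p$. The obstacle is that $p\mapsto 2^{p}u_p$ is in general neither monotone nor unbounded, so I would regularize: set $\widehat u_p=\max_{1\le q\le p}u_q 2^{-(p-q)}$ and $\nu_p=\max(2^{p}\widehat u_p,\,p)$, and take $\beta_n=\nu_p$ for $n\in I_p$. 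Then $\widehat u_p\ge\widehat u_{p-1}/2$ makes $\nu_p$ nondecreasing in $p$, hence $\beta_n$ nondecreasing; $\nu_p\ge p$ gives unboundedness; $\widehat u_p\to 0$ (split the defining maximum at $q\le p/2$) together with $b_n>2^{p-1}$ on $I_p$ gives $\beta_n/b_n\le 2\widehat u_p\to 0$; and $\nu_p\ge 2^{p}u_p$ with $b_n\le 2^{p}$ on $I_p$ gives $\sum_n c_n(b_n/\beta_n)^{r}\le\sum_p\sigma_p(2^{p}/\nu_p)^{r}\le\sum_p\sigma_p u_p^{-r}<+\infty$. Checking that this regularization keeps the ``$o(b_n)$'' genuine even when $b_n$ grows arbitrarily slowly is the only delicate point.

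Finally I would iterate. With $c_{\mathbf n}\ge 0$ summable, the marginal $c^{(1)}_{n_1}=\sum_{n_2,\dots,n_d}c_{\mathbf n}$ is a summable nonnegative sequence, so the one-variable core applied to $c^{(1)}$ and $b^{(1)}$ yields $\beta^{(1)}$ as above with $\sum_{\mathbf n}\bigl(b^{(1)}_{n_1}/\beta^{(1)}_{n_1}\bigr)^{r}c_{\mathbf n}<+\infty$; replace $c_{\mathbf n}$ by $\bigl(b^{(1)}_{n_1}/\beta^{(1)}_{n_1}\bigr)^{r}c_{\mathbf n}$, still summable, and repeat in $n_2$, then $n_3,\dots,n_d$, obtaining $\beta^{(2)},\dots,\beta^{(d)}$ and leaving, after the last step, $\sum_{\mathbf n}c_{\mathbf n}\prod_{1\le i\le d}\bigl(b^{(i)}_{n_i}/\beta^{(i)}_{n_i}\bigr)^{r}<+\infty$. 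By the reduction of the first paragraph, $\beta_{\mathbf n}=\prod_{i}\beta^{(i)}_{n_i}$ then has all the required properties. The only place where the product-type hypothesis on $b_{\mathbf n}$ is used is here, to make this coordinatewise peeling legitimate.
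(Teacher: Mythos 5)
Your proof is correct. Note first that the paper itself contains no argument for Lemma~\ref{lemma1}: it is quoted from Lemma~2 of Nosz\'aly and T\'om\'acs \cite{noszaly}, so there is no internal proof to compare against; your write-up is a complete, self-contained substitute. The reduction to dimension one through the marginals of $c_{\mathbf n}=a_{\mathbf n}/b_{\mathbf n}^{r}$ is exactly where the product-type hypothesis on $b_{\mathbf n}$ enters, and the identity $a_{\mathbf n}/\beta_{\mathbf n}^{r}=c_{\mathbf n}\prod_i\bigl(b^{(i)}_{n_i}/\beta^{(i)}_{n_i}\bigr)^{r}$ makes the iteration legitimate; the one-variable core (dyadic level blocks $I_p$ of $b$, tail sums $\tau_p$, the exponent $1/(2r)$ so that $\sum_p\sigma_p\tau_p^{-1/2}\le\int_0^{\tau_1}t^{-1/2}\,dt<\infty$, and the regularization $\widehat u_p=\max_{q\le p}u_q2^{-(p-q)}$, $\nu_p=\max(2^{p}\widehat u_p,p)$ to force monotonicity and unboundedness) all checks out. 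This is the same circle of ideas as the Fazekas--Klesov/Nosz\'aly--T\'om\'acs argument, which extracts a square root of the tails of $\sum a_n/b_n^{r}$ directly rather than after dyadic blocking; your version is neither more nor less general. Two cosmetic points you should tighten: the bound $\beta_n/b_n\le 2\widehat u_p$ only holds when $2^{p}\widehat u_p\ge p$, and in the opposite case one has $\beta_n/b_n\le p\,2^{1-p}$ instead --- both tend to $0$, so nothing breaks, but the displayed inequality as written is not literally true; and you should say explicitly how $\beta$ is defined on the finitely many discarded indices with $b_n\le 1$ (e.g.\ constant, equal to its first defined value), which preserves positivity and monotonicity and affects neither the series nor the limit.
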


\noindent Here is our general strong law of large numbers.

\begin{theorem}
\label{theo4} Let $a_{\mathbf{n}}$ be a non-negative 
d-sequence and let $r>0$. Suppose that $b_{\mathbf{n}}$ is a positive,
non-decreasing, unbounded d-sequence of product type. If
\begin{equation*}
\sum_{\mathbf{n}}\ \frac{a_{\mathbf{n}}}{b_{\mathbf{n}}^{r}}<+\infty
\end{equation*}%
and there exists $C>0$ such that for any $\mathbf{n}\in \mathbf{N}^{d}$ and
any $\varepsilon >0$ 
\begin{equation*}
\mathbb{P}\left( \max_{\mathbf{m}\leq \mathbf{n}}|S_{\mathbf{m}}|\geq
\varepsilon \right) \leq C\ \varepsilon ^{-r}\sum_{\mathbf{m}\leq \mathbf{n}%
}a_{\mathbf{m}}
\end{equation*}%
then 
\begin{equation*}
\lim_{\mathbf{n}\rightarrow +\infty }\frac{S_{\mathbf{n}}}{b_{\mathbf{n}}}%
=0.\ \ a.s
\end{equation*}%

\end{theorem}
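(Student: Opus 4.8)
The plan is to combine the Hájek–Rényi maximal inequality for probabilities from Proposition~\ref{prop2} with the weight-sharpening device of Lemma~\ref{lemma1}, and then run a standard Borel–Cantelli / subsequence argument adapted to the $d$-dimensional index set. First I would invoke Lemma~\ref{lemma1}: since $\sum_{\mathbf{n}} a_{\mathbf{n}} b_{\mathbf{n}}^{-r}<+\infty$ and $b_{\mathbf{n}}$ is positive, nondecreasing, unbounded of product type, there is a positive, nondecreasing, unbounded $d$-sequence $\beta_{\mathbf{n}}$ of product type with $\beta_{\mathbf{n}}/b_{\mathbf{n}}\to 0$ and $\sum_{\mathbf{n}} a_{\mathbf{n}} \beta_{\mathbf{n}}^{-r}<+\infty$. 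Because $\beta_{\mathbf{n}}/b_{\mathbf{n}}\to 0$, it suffices to prove $\max_{\mathbf{m}\le\mathbf{n}}|S_{\mathbf{m}}|/b_{\mathbf{n}}\to 0$ a.s.; in fact I will instead show the stronger statement that $S_{\mathbf{n}}/\beta_{\mathbf{n}}$ is a.s.\ bounded, or rather that $\max_{\mathbf{m}\le\mathbf{n}}|S_{\mathbf{m}}|\beta_{\mathbf{m}}^{-1}$ converges, which together with $\beta_{\mathbf{n}}/b_{\mathbf{n}}\to 0$ forces $S_{\mathbf{n}}/b_{\mathbf{n}}\to 0$.

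Next I would apply Proposition~\ref{prop2}: the hypothesis is exactly statement (i) of that proposition (with the given $C$, $r$, $a_{\mathbf{n}}$), so statement (ii) holds, applied with the weight sequence $\beta_{\mathbf{n}}$:
\begin{equation*}
\mathbb{P}\left(\max_{\mathbf{m}\le\mathbf{n}}|S_{\mathbf{m}}|\beta_{\mathbf{m}}^{-1}\ge\varepsilon\right)\le 4^{d}\,C\,\varepsilon^{-r}\sum_{\mathbf{m}\le\mathbf{n}}a_{\mathbf{m}}\beta_{\mathbf{m}}^{-r}.
\end{equation*}
Letting $\mathbf{n}\to\infty$ and using $\sum_{\mathbf{m}}a_{\mathbf{m}}\beta_{\mathbf{m}}^{-r}<+\infty$, we obtain, for every $\varepsilon>0$, the uniform tail bound $\mathbb{P}(\sup_{\mathbf{m}}|S_{\mathbf{m}}|\beta_{\mathbf{m}}^{-1}\ge\varepsilon)\le 4^{d}C\varepsilon^{-r}\sum_{\mathbf{m}}a_{\mathbf{m}}\beta_{\mathbf{m}}^{-r}<+\infty$; in particular $\sup_{\mathbf{m}}|S_{\mathbf{m}}|\beta_{\mathbf{m}}^{-1}<+\infty$ a.s. To upgrade this to convergence to $0$, I would fix a large reference index $\mathbf{k}$, split $\max_{\mathbf{m}\le\mathbf{n}}|S_{\mathbf{m}}|b_{\mathbf{n}}^{-1}$ as the part over $\mathbf{m}$ with some coordinate $\le k_i$ (which is a fixed random variable divided by $b_{\mathbf{n}}\to\infty$, hence $\to 0$) plus the "tail" part $\max_{\mathbf{k}<\mathbf{m}\le\mathbf{n}}|S_{\mathbf{m}}|\beta_{\mathbf{m}}^{-1}\cdot(\beta_{\mathbf{m}}/b_{\mathbf{n}})\le \bigl(\sup_{\mathbf{m}}|S_{\mathbf{m}}|\beta_{\mathbf{m}}^{-1}\bigr)\cdot\sup_{\mathbf{m}>\mathbf{k}}\beta_{\mathbf{m}}/b_{\mathbf{m}}$, which can be made arbitrarily small by choosing $\mathbf{k}$ large since $\beta_{\mathbf{m}}/b_{\mathbf{m}}\to 0$. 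Taking $\limsup_{\mathbf{n}}$ then $\mathbf{k}\to\infty$ yields $\limsup_{\mathbf{n}}|S_{\mathbf{n}}|/b_{\mathbf{n}}\le\limsup_{\mathbf{n}}\max_{\mathbf{m}\le\mathbf{n}}|S_{\mathbf{m}}|/b_{\mathbf{n}}=0$ a.s.

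The main obstacle I anticipate is purely the $d$-dimensional bookkeeping: the index set $\mathbf{N}^d$ is only partially ordered, so "$\mathbf{m}$ not exceeding $\mathbf{k}$ in some coordinate" is a union of $d$ slabs rather than an initial segment, and one must be careful that on each such slab $S_{\mathbf{m}}$ is still dominated by a single integrable/finite random variable as $\mathbf{n}\to\infty$ along the remaining coordinates — this is where the product-type structure of $\beta_{\mathbf{n}}$ (hence of $b_{\mathbf{n}}$) is essential, since it guarantees $\sup_{\mathbf{m}>\mathbf{k}}\beta_{\mathbf{m}}/b_{\mathbf{m}}\to 0$ coordinatewise. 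Everything else (the Borel–Cantelli-free argument via the uniform tail bound, the monotone passage $\mathbf{n}\to\infty$ in the series) is routine. Note this argument never uses moments, only Proposition~\ref{prop2} and Lemma~\ref{lemma1}, which is the point of the generalization.
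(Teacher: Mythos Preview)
Your overall strategy is exactly the paper's: invoke Lemma~\ref{lemma1} to get $\beta_{\mathbf n}$, apply Proposition~\ref{prop2} with the weights $\beta_{\mathbf n}$, pass to the limit $\mathbf n\to\infty$ (this is the paper's Lemma~\ref{lemma2}) to obtain $\sup_{\mathbf m}|S_{\mathbf m}|\beta_{\mathbf m}^{-1}<\infty$ a.s.\ (the paper's Lemma~\ref{lemma3}), and then use $\beta_{\mathbf n}/b_{\mathbf n}\to 0$.

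However, your final ``upgrade'' paragraph is both unnecessary and slightly wrong. The claim that the maximum over $\{\mathbf m\le\mathbf n:\ m_i\le k_i\ \text{for some }i\}$ is ``a fixed random variable divided by $b_{\mathbf n}$'' is false: that slab is unbounded in the remaining coordinates, so $\max_{\mathbf m}$ over it still depends on $\mathbf n$. More importantly, the whole splitting is superfluous. Once you know $M:=\sup_{\mathbf m}|S_{\mathbf m}|\beta_{\mathbf m}^{-1}<\infty$ a.s., you are done in one line, exactly as the paper does:
\[
0\le \frac{|S_{\mathbf n}|}{b_{\mathbf n}}=\frac{|S_{\mathbf n}|}{\beta_{\mathbf n}}\cdot\frac{\beta_{\mathbf n}}{b_{\mathbf n}}\le M\cdot\frac{\beta_{\mathbf n}}{b_{\mathbf n}}\longrightarrow 0\quad\text{a.s.}
\]
No reference index $\mathbf k$, no slab decomposition, and no concern about the partial order is needed here; the product-type structure of $\beta_{\mathbf n}$ was already consumed inside Proposition~\ref{prop2} and Lemma~\ref{lemma1}. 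So drop the splitting argument and keep the direct inequality you already hinted at.
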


\section{Proofs of the main results}

\label{sec3}

We will need Lemma \ref{lemma1} and these two following lemmas

\begin{lemma}
\label{lemma2} \ Let \ $\{Y_{\mathbf{k}}\ ,\ \ \mathbf{k}\in \mathbb{\mathbf{%
N}}^{d}\}$ \ be a field of random variables defined on a fixed probability
space \ $(\Omega ,\mathcal{F},\mathbb{P}).$ \ Then for all \ $x\in \mathbb{%
\mathbf{R}}$, 
\begin{equation*}
\mathbb{P}\left( {\sup_{\mathbf{k}}}Y_{\mathbf{k}}>x\right) ={\lim_{\mathbf{n%
}\longrightarrow \infty }}\ \mathbb{P}\left( {\max_{\mathbf{k}\leq \mathbf{n}%
}}Y_{\mathbf{k}}>x\right)
\end{equation*}
\end{lemma}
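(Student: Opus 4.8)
The plan is to reduce the claim to the ordinary continuity of probability along an increasing sequence of events, the only subtlety being that $\mathbb{\mathbf{N}}^{d}$ is merely partially ordered. First I would rewrite both sides as probabilities of unions. Since $\sup_{\mathbf{k}}Y_{\mathbf{k}}(\omega)>x$ holds exactly when $Y_{\mathbf{k}}(\omega)>x$ for some $\mathbf{k}\in\mathbb{\mathbf{N}}^{d}$,
\[
\left\{\sup_{\mathbf{k}}Y_{\mathbf{k}}>x\right\}=\bigcup_{\mathbf{k}\in\mathbb{\mathbf{N}}^{d}}\{Y_{\mathbf{k}}>x\},
\qquad
\left\{\max_{\mathbf{k}\leq\mathbf{n}}Y_{\mathbf{k}}>x\right\}=\bigcup_{\mathbf{k}\leq\mathbf{n}}\{Y_{\mathbf{k}}>x\}=:A_{\mathbf{n}}.
\]
Because $\mathbb{\mathbf{N}}^{d}$ is countable, all these sets lie in $\mathcal{F}$. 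The family $(A_{\mathbf{n}})$ is monotone for the partial order ($\mathbf{n}\leq\mathbf{m}\Rightarrow A_{\mathbf{n}}\subseteq A_{\mathbf{m}}$), so $\mathbb{P}(A_{\mathbf{n}})$ is nondecreasing in $\mathbf{n}$ and bounded above by $p:=\mathbb{P}(\sup_{\mathbf{k}}Y_{\mathbf{k}}>x)$.

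Next I would treat the diagonal subnet $\mathbf{n}^{(j)}:=(j,\dots,j)$, $j\geq 1$. Then $(A_{\mathbf{n}^{(j)}})_{j\geq 1}$ is an increasing sequence of events, and since each fixed $\mathbf{k}$ satisfies $\mathbf{k}\leq\mathbf{n}^{(j)}$ for all large $j$, one has $\bigcup_{j}A_{\mathbf{n}^{(j)}}=\bigcup_{\mathbf{k}}\{Y_{\mathbf{k}}>x\}$. Continuity of $\mathbb{P}$ from below then gives $\mathbb{P}(A_{\mathbf{n}^{(j)}})\uparrow p$ as $j\to\infty$.

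Finally I would upgrade this to convergence as $\mathbf{n}\to\infty$ in the paper's sense, i.e.\ $\min_{1\leq i\leq d}n_{i}\to\infty$. Given $\varepsilon>0$, pick $j$ with $\mathbb{P}(A_{\mathbf{n}^{(j)}})>p-\varepsilon$; then for every $\mathbf{n}$ with $\min_{i}n_{i}\geq j$ we have $\mathbf{n}^{(j)}\leq\mathbf{n}$, hence $p-\varepsilon<\mathbb{P}(A_{\mathbf{n}^{(j)}})\leq\mathbb{P}(A_{\mathbf{n}})\leq p$. Letting $\varepsilon\downarrow 0$ yields $\mathbb{P}(A_{\mathbf{n}})\to p$, which is the asserted identity. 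I do not expect any serious obstacle here: the whole argument is a squeeze between the diagonal subsequence, to which ordinary continuity from below applies, and the uniform upper bound $p$; the only points needing a word of care are the measurability of the countable supremum and the correct reading of the partial-order limit.
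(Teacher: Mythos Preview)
Your proof is correct and follows essentially the same approach as the paper: both express $\{\sup_{\mathbf{k}}Y_{\mathbf{k}}>x\}$ as the increasing union of the events $\{\max_{\mathbf{k}\leq\mathbf{n}}Y_{\mathbf{k}}>x\}$ and then invoke continuity of probability from below. The paper's version is terser, simply citing ``the monotone convergence Theorem for probabilities'' without spelling out the diagonal-subsequence and squeeze argument you supply to handle the partially ordered index set; your added care is warranted but not a different idea.
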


\begin{proof}
It is easy to see that, for all \ $x\in \mathbb{\mathbf{R}}.$ 
\begin{equation*}
\left( {\sup_{\mathbf{k}}}Y_{\mathbf{k}}>x\right) ={\bigcup_{\mathbf{n=1}%
}^{\infty }}\ \left( {\max_{\mathbf{k}\leq \mathbf{n}}}Y_{\mathbf{k}%
}>x\right)
\end{equation*}%
Hence, by the monotone convergence Theorem for probabilities, we get the
statement.
\end{proof}

\begin{lemma}
\label{lemma3} Let \ $\{Y_{\mathbf{k}},\ \mathbf{k}\in \mathbb{\mathbf{N}}%
^{d}\}$ \ be a field of random variables defined on a fixed probability
space \ $(\Omega ,\mathcal{F},\mathbb{P}).$ \ and \ $\{\varepsilon _{\mathbf{%
n}},\ \mathbf{n}\in \mathbb{\mathbf{N}}^{d}\}$ \ a nondecreasing field of
real numbers. If 
\begin{equation*}
{\lim_{\mathbf{n}\longrightarrow \infty }}\mathbb{P}\left( {\sup_{\mathbf{k}}%
}Y_{\mathbf{k}}>\varepsilon _{\mathbf{n}}\right) =0,
\end{equation*}%
then \ ${\sup_{\mathbf{k}}}Y_{\mathbf{k}}<\infty $ \ a.s.
\end{lemma}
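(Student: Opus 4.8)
The plan is to realize the event $\{\sup_{\mathbf{k}}Y_{\mathbf{k}}=+\infty\}$ as a subset of every one of the events $\{\sup_{\mathbf{k}}Y_{\mathbf{k}}>\varepsilon_{\mathbf{n}}\}$ occurring in the hypothesis, and then to let $\mathbf{n}\rightarrow\infty$.

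First I would observe that, since the index set $\mathbb{\mathbf{N}}^{d}$ is countable, $\sup_{\mathbf{k}}Y_{\mathbf{k}}$ is a well-defined $[-\infty,+\infty]$-valued random variable, and the event $A:=\{\sup_{\mathbf{k}}Y_{\mathbf{k}}=+\infty\}=\bigcap_{m\in\mathbb{\mathbf{N}}}\{\sup_{\mathbf{k}}Y_{\mathbf{k}}>m\}=\bigcap_{m\in\mathbb{\mathbf{N}}}\bigcup_{\mathbf{k}}\{Y_{\mathbf{k}}>m\}$ is measurable, so $\mathbb{P}(A)$ makes sense. The essential (and really only) point is then that each $\varepsilon_{\mathbf{n}}$ is a finite real number, so on $A$ one trivially has $\sup_{\mathbf{k}}Y_{\mathbf{k}}=+\infty>\varepsilon_{\mathbf{n}}$; hence $A\subseteq\{\sup_{\mathbf{k}}Y_{\mathbf{k}}>\varepsilon_{\mathbf{n}}\}$ for every $\mathbf{n}\in\mathbb{\mathbf{N}}^{d}$.

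By monotonicity of $\mathbb{P}$ this gives $\mathbb{P}(A)\leq\mathbb{P}(\sup_{\mathbf{k}}Y_{\mathbf{k}}>\varepsilon_{\mathbf{n}})$ for every $\mathbf{n}$. Passing to the limit as $\mathbf{n}\rightarrow\infty$ and invoking the hypothesis $\lim_{\mathbf{n}\rightarrow\infty}\mathbb{P}(\sup_{\mathbf{k}}Y_{\mathbf{k}}>\varepsilon_{\mathbf{n}})=0$, we conclude $\mathbb{P}(A)=0$, that is, $\sup_{\mathbf{k}}Y_{\mathbf{k}}<+\infty$ a.s., which is exactly the claim.

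I do not expect any genuine obstacle here; the argument is just measurability of a countable supremum plus monotonicity of probability. Note in particular that the monotonicity of the field $(\varepsilon_{\mathbf{n}})$ is not needed for this inclusion argument — it only serves to make the limit in the hypothesis a bona fide monotone limit (and, if one preferred, one could instead combine \lemref{lemma2} with a continuity-of-measure argument to evaluate $\lim_{\mathbf{n}}\mathbb{P}(\sup_{\mathbf{k}}Y_{\mathbf{k}}>\varepsilon_{\mathbf{n}})$ as $\mathbb{P}\big(\bigcap_{\mathbf{n}}\{\sup_{\mathbf{k}}Y_{\mathbf{k}}>\varepsilon_{\mathbf{n}}\}\big)$, but the direct inclusion $A\subseteq\{\sup_{\mathbf{k}}Y_{\mathbf{k}}>\varepsilon_{\mathbf{n}}\}$ is the cleaner route).
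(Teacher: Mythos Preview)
Your proof is correct. The route differs slightly from the paper's: the paper invokes continuity of $\mathbb{P}$ from above to write
\[
\mathbb{P}\Bigl(\bigcap_{\mathbf{n}}\{\sup_{\mathbf{k}}Y_{\mathbf{k}}>\varepsilon_{\mathbf{n}}\}\Bigr)=\lim_{\mathbf{n}}\mathbb{P}\bigl(\sup_{\mathbf{k}}Y_{\mathbf{k}}>\varepsilon_{\mathbf{n}}\bigr)=0,
\]
then passes to complements to find, for almost every $\omega$, an index $\mathbf{n}_\omega$ with $\sup_{\mathbf{k}}Y_{\mathbf{k}}(\omega)\le\varepsilon_{\mathbf{n}_\omega}<\infty$. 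Your argument bypasses the monotone convergence step entirely by observing the single inclusion $\{\sup_{\mathbf{k}}Y_{\mathbf{k}}=+\infty\}\subseteq\{\sup_{\mathbf{k}}Y_{\mathbf{k}}>\varepsilon_{\mathbf{n}}\}$ and using only monotonicity of $\mathbb{P}$. This is cleaner and, as you correctly note, does not use the nondecreasing hypothesis on $(\varepsilon_{\mathbf{n}})$; the paper's proof genuinely needs that hypothesis to make the events decrease so that continuity from above applies. (Your parenthetical reference to \lemref{lemma2} is a bit off --- that lemma concerns replacing $\sup_{\mathbf{k}}$ by $\max_{\mathbf{k}\le\mathbf{n}}$ inside the probability, which is not what is at stake here --- but this is only a side remark and does not affect your argument.)
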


\begin{proof}
By using the monotone convergence Theorem for probabilities, we have

\begin{equation*}
\mathbb{P}\left( {\bigcap_{\mathbf{n=1}}^{\infty }}\left( {\sup_{\mathbf{k}}}%
Y_{\mathbf{k}}>\varepsilon _{\mathbf{n}}\right) \right) ={\lim_{\mathbf{n}%
\longrightarrow \infty }}\mathbb{P}\left( {\sup_{\mathbf{k}}}Y_{\mathbf{k}%
}>\varepsilon _{\mathbf{n}}\right) =0
\end{equation*}

\noindent which is equivalent to \ $\mathbb{P}\left( \bigcup_{\mathbf{n=1}%
}^{\infty }\left( \sup_{\mathbf{k}}\ Y_{\mathbf{k}} \leq \varepsilon _{\mathbf{n}}\right) \right) =1$\newline
This implies that there exists \ $\textbf{n}_{\omega }\in \textbf{N}^{d}$ for almost every \ $%
\omega \in \Omega $ \ such that $\sup_{\mathbf{k}}\ Y_{\mathbf{k}}(\omega
)\leq \varepsilon _{\mathbf{n}_{\omega }}<\infty .$
\end{proof}

\bigskip

\noindent We need more notation for the proofs. In \ $\textbf{N}^{d},$ \ the
maximum is defined coordinate-wise (actually we shall use it only for
rectangles). If \ $\textbf{n}=(n_{1},...,n_{d})\in \mathbf{N}^{d},$ \ then \ 
$\langle \textbf{n} \rangle=\prod_{i=1}^{d}n_{i}.$ A numerical sequence \ $a_{\textbf{n}},\ \textbf{n}\in
\textbf{N}^{d}$ is called d-sequence. If \ $a_{\mathbf{n}}$ \ is a d-sequence
then its difference sequence, i.e. the d-sequence \ $b_{\mathbf{n}}$ \
for which $\sum_{\mathbf{m}\leq \mathbf{n}}b_{\mathbf{m}}=a_{\mathbf{n}},\
\textbf{n}\in \textbf{N}^{d},$ \ will be denoted by \ $\Delta \ a_{\mathbf{n}}(i.e.\ \Delta \
a_{\mathbf{n}}=b_{\mathbf{n}}$). We shall say that a d-sequence \ $a_{%
\mathbf{n}}$ \ is of product type if \ $a_{\mathbf{n}}=%
\prod_{i=1}^{d}a_{n_{i}}^{(i)},$ \ where \ $a_{n_{i}}^{(i)}(n_{i}=0,1,2,...)$
\ is a (single) sequence for each \ $i=1,...d.$ \ Our consideration will be
confined to normalizing constants of product type : \ $b_{\mathbf{n}}$ \
will always denote \ $b_{\mathbf{n}}=\prod_{i=1}^{d}b_{n_{i}}^{(i)},$ \
where \ $b_{n_{i}}^{(i)}(n_{i}=0,1,2,...)$ \ is a nondecreasing sequence of
positive numbers for each \ $i=1,...,d.$ \ In this case we shall say that \ $%
b_{\mathbf{n}}$ \ is a positive nondecreasing \ d-sequence of product
type. Moreover, if for each \ $i=1,...,d$ \ the sequence \ $b_{n_{i}}^{(i)}$
\ is unbounded, then \ $b_{\mathbf{n}}$ \ is called positive, nondecreasing,
unbounded d-sequence of product type. \ As usual, $\log ^{+}(x):\doteq
\max (1,\log (x))$, $x>0$ \and\ $|\log \textbf{n}|:=\prod_{m=1}^{d%
}\log ^{+}n_{m}\ .$ \newline

\noindent \textbf{Proof of Proposition \ref{prop2}}. It is clear that (ii)
implies (i) by taking $b_{m_{j}}=1$ for all $\mathbf{m}\in \mathbf{N}^{d}$
and $1\leq j\leq d$. Now we turn to $(i)\Longrightarrow (ii).$ 
We can assume without loss of generality that $b_{0,j}=1$ for $1\leq j\leq d.$ \
If not, we would replace $b_{\mathbf{m}}$ by $%
\prod_{j=1}^{d}b_{m,j}/b_{0,j},$ $\mathbf{m}\in \mathbf{N}^{d}$\ and $(ii)$
would remain true with a new constant equal to $Cb_{0}^{-r}=C(%
\prod_{j=1}^{d}b_{\mathbf{0},j})^{-r}.$ Now consider a fixed \ $\mathbf{n}%
\in \mathbf{N}^{d}$ \ and an arbitrary a real number $c>1$. Remark
by the monotonicity of $(b_{\mathbf{m}})$ that $b_{\mathbf{m}_{j}}\geq 1$ for
all $\mathbf{m}\in N^{d}$ and that the sequence $(c^{p})_{p\geq 0}$ forms a
partition of $[1,+\infty \lbrack .$ This implies that for any $\mathbf{m}\in 
\mathbf{N}^{d}$, for any $1\leq j\leq d,$ there exists a nonnegative integer 
$i_{j}$ such that $c^{i_{j}}\leq b_{m_{j}}<c^{i_{j}+1}.$ Thus for $\mathbf{i}%
=(i_{1},...,i_{d}),$ we have that $\mathbf{m}\in \mathcal{A}_{\mathbf{i}}=\{%
\mathbf{s}\in \mathbb{\mathbf{N}}^{d}\ \ \ and\ \ \ c^{i_{j}}\leq b_{\mathbf{%
s}_{j}}<c^{i_{j}+1},j=1,...,d\}.$ Since this holds for all $\mathbf{m}\in 
\mathbf{N}^{d},$ we get 
\begin{equation*}
\mathbf{N}^{d}=\bigcup_{i\in \mathbf{N}^{d}}\mathcal{A}_{\mathbf{i}}.
\end{equation*}
Let us restrict ourselves to $\mathbf{m}\leq \mathbf{n},$ and let us define 
\begin{equation*}
\mathcal{A}_{\mathbf{i,n}}=\{\mathbf{s}\in \mathbb{\mathbf{N}}^{d},\mathbf{s}%
\leq \mathbf{n}\ \ \ and\ \ \ c^{i_{j}}\leq b_{\mathbf{s}%
_{j}}<c^{i_{j}+1},j=1,...,d\}.
\end{equation*}
Since $c^{p}\rightarrow \infty $ as $p\rightarrow \infty $ and for $\mathbf{m%
}\in $ $\mathcal{A}_{\mathbf{i,n}},$ for $1\leq j\leq d$, $b_{\mathbf{s}%
_{j}}\leq b_{\mathbf{n}_{j}}\leq \max \{b_{n_{k}},1\leq k\leq d\}<\infty ,$
the sets $\mathcal{A}_{\mathbf{i,n}}$ are empty for large values of $i.$
Then put $\mathbf{k}_{\mathbf{n}}=\max \{\mathbf{i}:\mathcal{A}_{\mathbf{i,n}%
}\neq \emptyset \}<+\infty $ and we have 
\begin{equation*}
\lbrack 0,n]=\bigcup_{\mathbf{i}\leq \mathbf{k}_{\mathbf{n}}}\mathcal{A}_{%
\mathbf{i,n}}.
\end{equation*}
It is also noticeable that if $\mathbf{m}\leq \mathbf{s}\in $\ $\mathcal{A}_{%
\mathbf{i,n}}$, then necessarily $\mathbf{m}$ is in some $\mathcal{A}_{%
\mathbf{i}^{\prime }\mathbf{,n}}$ with $\mathbf{i}^{\prime }\leq \mathbf{i}.$
As well let $\mathbf{m}_{\mathbf{i,n}}=\max \mathcal{A}_{\mathbf{i,n}}\leq 
\mathbf{n}$ and define $D_{\mathbf{i,n}}={\sum_{\mathbf{m}\in \mathcal{A}_{%
\mathbf{i,n}}}}a_{\mathbf{m}}$\ where, by convention, $D_{\mathbf{i,n}}=0$
and $\mathbf{m_{i,n}}=(0,...,0)$ when $\mathcal{A}_{\mathbf{i,n}}=\emptyset $%
. From all that, we have

\begin{equation*}
\mathbb{P}\left( {\max_{\mathbf{m}\leq \mathbf{n}}}|S_{\mathbf{m}}|b_{%
\mathbf{m}}^{-1}\geq \varepsilon \right) \leq {\sum_{\mathbf{i}\leq \mathbf{k%
}_{\mathbf{n}}}}\mathbb{P}\left( {\max_{\mathbf{m}\in \mathcal{A}_{\mathbf{%
i,n}}}}|S_{\mathbf{m}}|b_{\mathbf{m}}^{-1}\geq \varepsilon \right) .
\end{equation*}
Since for $\mathbf{m}\in \mathcal{A}_{\mathbf{i,n}},$ $b_{\mathbf{m}%
}=\prod_{j=1}^{d}b_{m_{j}}\geq \prod_{j=1}^{d}c^{i_{j}}$ and $\mathcal{A}_{%
\mathbf{i,n}}\subset \lbrack 0,m_{\mathbf{i,n}}],$ we get

\begin{equation*}
\mathbb{P}\left( {\max_{\mathbf{m}\leq \mathbf{n}}}|S_{\mathbf{m}}|b_{%
\mathbf{m}}^{-1}\geq \varepsilon \right) \leq {\sum_{\mathbf{i}\leq \mathbf{k%
}_{\mathbf{n}}}}\mathbb{P}\left( {\max_{\mathbf{m}\in \mathcal{A}_{\mathbf{%
i,n}}}}|S_{\mathbf{m}}|b_{\mathbf{m}}^{-1}\geq \varepsilon \right) .\leq {%
\sum_{\mathbf{i}\leq \mathbf{k}_{\mathbf{n}}}}\ \mathbb{P}\left( {\max_{%
\mathbf{m}\in \mathcal{A}_{\mathbf{i,n}}}}|S_{m}|\geq \varepsilon {%
\prod_{j=1}^{d}}c^{i_{j}}\right) .
\end{equation*}
Now by applying (i) one arrives at

\begin{equation*}
\mathbb{P}\left( {\max_{\mathbf{m}\leq \mathbf{n}}}|S_{\mathbf{m}}|b_{%
\mathbf{m}}^{-1}\geq \varepsilon \right) \leq C\varepsilon ^{-r}{\sum_{%
\mathbf{i}\leq \mathbf{k}_{\mathbf{n}}}\prod_{j=1}^{d}c^{-ri_{j}}\sum_{%
\mathbf{m\leq m}_{i,n}}}a_{m}\leq C\varepsilon ^{-r}{\sum_{\mathbf{i}\leq 
\mathbf{k}_{\mathbf{n}}}\prod_{j=1}^{d}}c^{-ri_{j}}\sum_{\mathbf{m}\leq 
\mathbf{i}}D_{\mathbf{m,n}}.
\end{equation*}
By the remark made above, $\mathbf{m\leq m}_{i,n}\in $ $\mathcal{A}_{\mathbf{%
i,n}}$ implies that $\mathbf{m}$ is in some $\mathcal{A}_{\mathbf{s,n}}$
where $\mathbf{s\leq i}$ and then by the definition of the $D_{\mathbf{i,n}}$
on has ${\sum_{\mathbf{m\leq m}_{i,n}}}a_{m}\leq \sum_{\mathbf{m}\leq 
\mathbf{i}}D_{\mathbf{m,n}}$ and next\bigskip 
\begin{equation*}
\mathbb{P}\left( {\max_{\mathbf{m}\leq \mathbf{n}}}|S_{\mathbf{m}}|b_{%
\mathbf{m}}^{-1}\geq \varepsilon \right) \leq C\varepsilon ^{-r}{\sum_{%
\mathbf{i}\leq \mathbf{k}_{\mathbf{n}}}\prod_{j=1}^{d}}c^{-ri_{j}}\sum_{%
\mathbf{m}\leq \mathbf{i}}D_{\mathbf{m}},
\end{equation*}
which becomes by a straightforward manipulations on the ranges of the sums, and
where $k_{\mathbf{n}}(j)$ stands for the $j$-$th$ coordinate of $k_{\mathbf{n%
}},$

\begin{equation*}
\mathbb{P}\left( {\max_{\mathbf{m}\leq \mathbf{n}}}|S_{\mathbf{m}}|b_{%
\mathbf{m}}^{-1}\geq \varepsilon \right) \leq C\varepsilon ^{-r}{\sum_{m\leq 
\mathbf{k}_{\mathbf{n}}}D_{\mathbf{m,n}}\sum_{m\leq \mathbf{i}\leq \mathbf{k}%
_{\mathbf{n}}}\prod_{j=1}^{d}}c^{-ri_{j}}
\end{equation*}

\begin{equation*}
\leq C\varepsilon ^{-r}{\sum_{m\leq \mathbf{k}_{\mathbf{n}}}}D_{\mathbf{m,n}}%
{\prod_{j=1}^{d}\sum_{m_{j}\leq i_{j}\leq k_{\mathbf{n}}(j)}}c^{-ri_{j}}
\end{equation*}

\begin{equation*}
=C\varepsilon ^{-r}{\sum_{m\leq \mathbf{k}_{\mathbf{n}}}}D_{\mathbf{m,n}}{%
\prod_{j=1}^{d}}\frac{c^{-rm_{j}}-c^{-r(k_{\mathbf{n}}(j)+1)}}{1-c^{-r}}\leq
C\varepsilon ^{-r}{\sum_{m\leq \mathbf{k}_{\mathbf{n}}}D_{\mathbf{m,n}%
}\prod_{j=1}^{d}}\frac{c^{-rm_{j}}}{1-c^{-r}},
\end{equation*}
since $c>1$ and $k_{\mathbf{n}}(j)+1>m_{j}.$ Now, at this last but one step,
we have

\begin{equation*}
\mathbb{P}\left( {\max_{\mathbf{m}\leq \mathbf{n}}}|S_{\mathbf{m}}|b_{%
\mathbf{m}}^{-1}\geq \varepsilon \right) \leq C\varepsilon ^{-r}\left( \frac{%
c^{r}}{1-c^{-r}}\right) ^{d}\text{ }{\sum_{m\leq \mathbf{k}_{\mathbf{n}}}D_{%
\mathbf{m,n}}\prod_{j=1}^{d}}c^{-r(m_{j}+1)}
\end{equation*}

\begin{equation*}
\leq C\varepsilon ^{-r}\left( \frac{c^{r}}{1-c^{-r}}\right) ^{d}{\sum_{m\leq 
\mathbf{k}_{\mathbf{n}}}\sum_{\mathbf{s}\in \mathcal{A}_{\mathbf{m,n}}}}a_{%
\mathbf{s}}{\prod_{j=1}^{d}}c^{-r(m_{j}+1)}.
\end{equation*}

\noindent Finally, taking into account the fact that for $\mathbf{s}\in \mathcal{A}_{\mathbf{m,n%
}},$ $c^{m_{j}+1}\geq b_{s_{j}}$, $1\leq j\leq d,$ that is ${%
\prod_{j=1}^{d}}c^{r(m_{j}+1)}\geq b_{\mathbf{s}}^{r},$ we arrive at 
\begin{equation*}
\mathbb{P}\left( {\max_{\mathbf{m}\leq \mathbf{n}}}|S_{\mathbf{m}}|b_{%
\mathbf{m}}^{-1}\geq \varepsilon \right) \leq C\varepsilon ^{-r}\left( \frac{%
c^{r}}{1-c^{-r}}\right) ^{d}\text{ }{\sum_{m\leq \mathbf{k}_{\mathbf{n}%
}}\sum_{\mathbf{s}\in \mathcal{A}_{\mathbf{m,n}}}}\frac{a_{\mathbf{s}}}{b_{%
\mathbf{s}}^{r}}
\end{equation*}

\begin{equation*}
\leq C\varepsilon ^{-r}\left( \frac{c^{r}}{1-c^{-r}}\right) ^{d}\text{ }{%
\sum_{\mathbf{m}\leq \mathbf{n}}}\frac{a_{\mathbf{m}}}{b_{\mathbf{m}}^{r}}.
\end{equation*}

\noindent Since \ $c$ is arbitrary $c>1$ and $\min_{c>1}\frac{c^{r}}{1-c^{-r}}=4,$ we
achieve the proof by 
\begin{equation*}
\mathbb{P}\left( {\max_{\mathbf{m}\leq \mathbf{n}}}|S_{\mathbf{m}}|b_{%
\mathbf{m}}^{-1}\geq \varepsilon \right) \leq 4^{d} \ C \ \ \varepsilon ^{-r}\text{ }%
{\sum_{\mathbf{m}\leq \mathbf{n}}}\frac{a_{\mathbf{m}}}{b_{\mathbf{m}}^{r}}.
\end{equation*}

\bigskip

\noindent \textbf{Proof of Theorem \ref{theo4}}. Let $\beta _{\mathbf{n}}$
be the d-sequence obtained in the Lemma \ref{lemma1}. According to
Proposition \ref{prop2} 
\begin{equation*}
\forall \mathbf{m}\leq \mathbf{n},\ \ \ \mathbb{P}\left( \max_{\mathbf{\ell}\leq 
\mathbf{m}}|S_{\mathbf{\ell}}|\ \beta _{\mathbf{\ell}}^{-1}\geq \varepsilon_{\mathbf{k}}\right) \leq 4^{d}\ C\ \varepsilon_{\mathbf{k}}^{-r}\ \sum_{\mathbf{\ell}\leq 
\mathbf{m}}a_{\mathbf{\ell}}\ \beta _{\mathbf{\ell}}^{-r}\ \ .
\end{equation*}

\noindent By this fact we get for any fixed $\mathbf{k}\in \textbf{N}^{d}$

\begin{equation*}
\mathbb{P}\left( \sup_{\mathbf{\ell}\leq \mathbf{m}}|S_{\mathbf{\ell}}|\ \beta _{\mathbf{\ell}}^{-1}\geq \varepsilon _{\mathbf{k}}\right)
\end{equation*}

\begin{equation*}
\leq \lim_{\mathbf{m}\rightarrow +\infty }\ \mathbb{P}\left( \max_{\mathbf{\ell}%
\leq \mathbf{m}}|S_{\mathbf{\ell}}|\ \beta _{\mathbf{\ell}}^{-1}\geq \varepsilon _{%
\mathbf{k}}\right)
\end{equation*}

\begin{equation*}
 \leq 4^{d}\ C\ \varepsilon _{\mathbf{k}}^{-r}\ \sum_{\mathbf{n}}\ a_{\mathbf{%
n}}\ \beta _{\mathbf{n}}^{-r}
\end{equation*}

\noindent where $\{\varepsilon _{\mathbf{k}},\ \textbf{k}\in \textbf{N}^{d}\}$ \ a positive,
non decreasing, unbounded field of real numbers. So we have by Lemma \ref%
{lemma1}

\begin{equation*}
\lim_{\mathbf{k}\rightarrow +\infty }\ \mathbb{P}\left( \sup_{\mathbf{\ell}}|S_{%
\mathbf{\ell}}|\ \beta _{\mathbf{\ell}}^{-1}\geq \varepsilon _{\mathbf{k}}\right)
=0.
\end{equation*}%
Using Lemma \ref{lemma2} 
\begin{equation*}
\mathbb{P}\left( \sup_{\mathbf{\ell}}|S_{\mathbf{\ell}}|\ \beta _{\mathbf{\ell}%
}^{-1}\geq \varepsilon _{\mathbf{k}}, \ \ for \ \ all \ \ \textbf{k}\in \textbf{N}^{d}\right) =0.
\end{equation*}%
So we have by Lemma \ref{lemma3}%
\begin{equation*}
\sup_{\mathbf{\ell}}|S_{\mathbf{\ell}}|\ \beta _{\mathbf{\ell}}^{-1}<\infty \ \ a.s\ .
\end{equation*}%
Finally by Lemma \ref{lemma1}%
\begin{equation*}
0\leq \frac{|S_{\mathbf{n}}|}{b_{\mathbf{n}}}=\frac{|S_{\mathbf{n}}|}{\beta
_{\mathbf{n}}}  \\\ \frac{\beta _{\mathbf{n}}}{b_{\mathbf{n}}}
\end{equation*}%
\begin{equation*}
\leq \sup_{\mathbf{\ell}}|S_{\mathbf{\ell}}| \\\ \beta _{\mathbf{\ell}}^{-1} \\\ \frac{\beta _{\mathbf{n}}}{b_{\mathbf{n}}}\rightarrow
0. \ a.s
\end{equation*}%

\section{Conclusion}

\label{sec4}

\subsection{A first Application : Logarithmically weighted sums}

The following result is an extension of Theorem 7 in Nosz\'{a}ly and T\'{o}m\'{a}%
cs \cite{noszaly} and of Theorem 4.2 in Fazekas et al. \cite{fazekas99}. In
this Theorem, we do not need any moment assumption in contrary of these
above cited theorems.

\begin{theorem}
\label{theo5} Let \ $\{X_{\mathbf{n}},\mathbf{n}\in \mathbb{\mathbf{N}}%
^{d}\} $ \ be a field of random variables. Let \ $r>1$ . We assume there
exists \ $C>0$ \ such that for any \ $\mathbf{m} \in \mathbb{\mathbf{N}}%
^{d} $ \ and any \ $\varepsilon >0$ 
\begin{equation*}
\mathbb{P}\left( {\max_{\mathbf{\ell}\leq \mathbf{m}}\sum_{\mathbf{k}\leq \ 
\mathbf{\ell}}}\frac{X_{\mathbf{k}}}{\langle\mathbf{k}\rangle}\geq \varepsilon \right) \leq
C\varepsilon ^{-r}{\sum_{\ \mathbf{\ell}\leq \mathbf{m}}}\frac{1}{\langle\ \mathbf{\ell}\rangle%
}.
\end{equation*}

\noindent Then 
\begin{equation*}
\frac{1}{|\log \mathbf{n}|}{\sum_{\mathbf{k}\leq \mathbf{n}}}\frac{X_{\mathbf{k%
}}}{\langle \mathbf{k} \rangle}\longrightarrow 0\ (\mathbf{n}\longrightarrow +\infty \ )\
a.s
\end{equation*}
\end{theorem}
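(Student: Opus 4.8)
The plan is to obtain Theorem~\ref{theo5} as a direct specialization of Theorem~\ref{theo4}. I would apply the latter to the field $Y_{\mathbf{k}} = X_{\mathbf{k}}/\langle\mathbf{k}\rangle$ (indices with a vanishing coordinate being discarded, as is forced by the division by $\langle\mathbf{k}\rangle$), so that the associated partial sums are $S_{\mathbf{n}} = \sum_{\mathbf{k}\leq\mathbf{n}} X_{\mathbf{k}}/\langle\mathbf{k}\rangle$, and I would take $a_{\mathbf{n}} = 1/\langle\mathbf{n}\rangle$ and $b_{\mathbf{n}} = |\log\mathbf{n}| = \prod_{m=1}^{d}\log^{+} n_{m}$. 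The first thing to check is that $b_{\mathbf{n}}$ qualifies as a positive, nondecreasing, unbounded $d$-sequence of product type: it is of product type by construction, and each factor $n_{m}\mapsto \log^{+} n_{m}=\max(1,\log n_{m})$ is positive ($\geq 1$), nondecreasing, and tends to $+\infty$.

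The only step that requires genuine work is verifying the summability hypothesis of Theorem~\ref{theo4}. Since $a_{\mathbf{n}}$ and $b_{\mathbf{n}}$ are both of product type, the series factors as
\begin{equation*}
\sum_{\mathbf{n}}\frac{a_{\mathbf{n}}}{b_{\mathbf{n}}^{r}}
=\sum_{\mathbf{n}}\frac{1}{\langle\mathbf{n}\rangle\,|\log\mathbf{n}|^{r}}
=\prod_{m=1}^{d}\Bigl(\sum_{n\geq 1}\frac{1}{n\,(\log^{+} n)^{r}}\Bigr),
\end{equation*}
and each one-dimensional factor is a Bertrand-type series, convergent if and only if $r>1$ (by the integral test, $\int^{\infty}\frac{dx}{x(\log x)^{r}}$ is finite exactly when $r>1$). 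This is precisely where the standing assumption $r>1$ enters, and I expect it to be the only delicate point; everything else is bookkeeping.

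It then remains to match the probabilistic hypotheses. The maximal inequality assumed in Theorem~\ref{theo5} is exactly the hypothesis of Theorem~\ref{theo4} with the stated $a_{\mathbf{n}}$, modulo passing from $\max_{\mathbf{\ell}\leq\mathbf{m}} S_{\mathbf{\ell}}$ to $\max_{\mathbf{\ell}\leq\mathbf{m}}|S_{\mathbf{\ell}}|$; this is a harmless strengthening costing at most a factor $2$ in $C$, since $\{\max|S_{\mathbf{\ell}}|\geq\varepsilon\}\subseteq\{\max S_{\mathbf{\ell}}\geq\varepsilon\}\cup\{\max(-S_{\mathbf{\ell}})\geq\varepsilon\}$ and the assumed bound (whose right-hand side does not see the sign of the summands) applies equally to the field $-X_{\mathbf{k}}$. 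With both hypotheses of Theorem~\ref{theo4} in force for these $a_{\mathbf{n}}$, $b_{\mathbf{n}}$ and $r$, that theorem yields $S_{\mathbf{n}}/b_{\mathbf{n}}\to 0$ a.s., which is exactly the assertion $\frac{1}{|\log\mathbf{n}|}\sum_{\mathbf{k}\leq\mathbf{n}} X_{\mathbf{k}}/\langle\mathbf{k}\rangle\to 0$ a.s.
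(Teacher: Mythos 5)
Your proof is correct and is essentially the paper's own argument: apply Theorem~\ref{theo4} with $a_{\mathbf{n}}=1/\langle\mathbf{n}\rangle$, $b_{\mathbf{n}}=|\log\mathbf{n}|$, the only substantive check being the convergence of $\sum_{\mathbf{n}}\langle\mathbf{n}\rangle^{-1}|\log\mathbf{n}|^{-r}$ for $r>1$, which you verify by the same product/Bertrand-series observation. You are in fact slightly more careful than the paper, which silently reads the hypothesis as a bound on $\max_{\mathbf{\ell}\leq\mathbf{m}}|S_{\mathbf{\ell}}|$; just note that your two-sided fix implicitly assumes the stated inequality also holds for the field $-X_{\mathbf{k}}$, which is not literally given but is clearly the intended reading.
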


\begin{proof}
Let us apply Theorem \ref{theo4} with $a_{\mathbf{n}} = \frac{1}{\langle%
\mathbf{n} \rangle}\ and\ b_{\mathbf{n}} = |\log \textbf{n}|$. The proof is achieved by
remarking that for $r>1$ 
\begin{equation*}
\sum_{\textbf{n}} \frac{a_{\textbf{n}}}{b_{\textbf{n}}^{r}}= \sum_{\mathbf{n}}\frac{1}{|\log \mathbf{n}%
|^{r}}\frac{1}{ \langle\mathbf{n} \rangle}<+\infty.
\end{equation*}
\end{proof}

\subsection{A second application.}

By using Markov's Inequality and applying our results (see Theorem \ref%
{theo4}), under the same assumptions in Nosz\'{a}ly and T\'{o}m\'{a}cs \cite%
{noszaly}, we rediscover their results.\\

\noindent \textbf{Acknowledgement}.\\
The paper was finalized while the second author was visiting MAPMO, University of Orl\'eans, France, in 2011. He expresses his warm thanks to responsibles of MAPMO for kind hospitality. The authors also thank the referee for his valuable comments and suggestions.\\

\end{document}